\newcommand{\R}{\mathbb{R}}
\newtheorem{definition}{Definition}[section]
\newtheorem{theorem}{Theorem}[section]
\newtheorem{corollary}[theorem]{Corollary}
\newtheorem{proposition}{Proposition}[section]
\newtheorem{remark}{Remark}
\newtheorem{lemma}{Lemma}[section]
\newcommand{\tos}{\rightrightarrows} % point-to-set mappings
\DeclareMathOperator{\co}{co}
\DeclareMathOperator{\qep}{QEP}
\DeclareMathOperator{\qvi}{QVI}
\DeclareMathOperator{\mqep}{MQEP}
\DeclareMathOperator{\fix}{Fix}
\DeclareMathOperator{\gra}{graph}
\DeclareMathOperator{\Int}{int}
\title{Coerciveness condition for quasi-equilibrium problems}
\author{John Cotrina
	\thanks{Universidad del Pac\'ifico. Lima, Per\'u. Email: 
	\texttt{ cotrina\_je@up.edu.pe}}
	\and Abderrahim Hantoute 
	\thanks{Center for Mathematical Modeling (CMM), Chile. Email: \texttt{ahantoute@dim.uchile.cl,  asvensson@dim.uchile.cl}}
	\and Anton Svensson\footnotemark[2]
}
\begin{document}
\maketitle

\begin{abstract}
A quasi-equilibrium problem is an equilibrium problem where the constraint set does depend on the reference point. It generalizes important problems such as quasi-variational inequalities and generalized Nash equilibrium problems. We study the existence of equilibria on unbounded sets under a coerciveness condition adapted from one specific for 
quasi-variational inequalities recently proposed by Aussel and Sultana. 
We discuss the relation of our results with others that are present in the literature.
\bigskip

\noindent{\bf Keywords:  Coerciveness condition, Generalized monotonicity, Generalized convexity, Quasi-equilibrium problem}

\bigskip

\noindent{{\bf MSC (2010)}: 49J40, 49J45, 54C60, 90C37} 
\end{abstract}

\section{Introduction}
By \emph{equilibrium problem}, Blum and Oettli \cite{OB93}, mean the problem of finding:
\begin{align}\label{EP}
x_0\in C\mbox{ such that }f(x_0,y) \geq0\mbox{ for all }y\in C,
\end{align}
where a subset $C$ of $\R^n$ and a function $f:\R^n\times\R^n\to\R$ are given.

Classical existence results for this problem on an unbounded constraint set usually involve the same  
hypotheses as for bounded constraint set together with a coerciveness condition, see for instance \cite{BP01,BP05,JCYG}, and their references.

We consider next the  \emph{quasi-equilibrium problem},
(QEP) for short, which is the object of our main interest in this paper. The QEP consists of an equilibrium problem in which the constraint set depends on the currently analysed point.
More precisely, given a function $f:\R^n\times\R^n\to\R$ and a set-valued map $K:C\tos C$, where
$C$ is a non-empty subset of $\R^n$, the QEP consists of finding:
\begin{align}\label{QEP}
x_0\in C \mbox{ such that }x_0\in K(x_0), \mbox{ and }f(x_0,y)\geq0 \mbox{ for all }y\in K(x_0).
\end{align}
In a similar way, the \emph{Minty quasi-equilibrium problem}, (MQEP) for short, consists of finding:
\begin{align}\label{MQEP}
x_0\in C \mbox{ such that } x_0\in K(x_0),\mbox{ and }f(y,x_0)\leq0 \mbox{ for all }y\in K(x_0).
\end{align}

Recently, these quasi-equilibrium problems have begun to gain more and more attention
due to the fact that they summarise many problems such as quasi-variational inequalities,
generalized Nash equilibrium problems, among others (e.g. \cite{castellani_Giuli15,JC-JZ2}).

Many existence results for (QEP) involve the compactness of $C$, see for instance \cite{ACI,castellani_Giuli15,JC-JZ2,castellani2018,Cubio,Lassonde-b,JC-JZ-2018}. 
In this compact context, Lassonde, in \cite{Lassonde-b}, proposed an existence result without any lower semi-continuity
assumption on the constraint set-valued map. On the other hand, Cubiotti in \cite{Cubio} gave a version
without upper semi-continuity assumption of the constraints. Later in \cite{castellani2018,JC-JZ-2018}, the authors 
used Cubiotti's idea, in order to generalize the famous minimax inequality due to Ky Fan.
Although in \cite{JC-JZ-2018} the authors deal with a non-compact set $C$, they still consider constraint maps
having compact values.

The pioneering works treating the non-compact case were presented by Tian and Zhou in \cite{Tian-Zhou}, Noor and Oettli in  \cite{Oettli}, and Ding in \cite{Ding}.
It is important to remark that in \cite{Tian-Zhou} the authors worked without lower semi-continuity, but they need the upper semi-continuity of the function associated with the quasi-equilibrium problem. In \cite{Oettli,Ding} the continuity of both the function and the constraint map were considered.

Our aim in this work is to provide some existence results for (QEP), under a coerciveness condition which is inspired from \cite{Au-Su}. In Section \ref{preliminaries} we present basic and classical notions on generalized convexity, generalized monotonicity, continuity for set-valued maps and some results. In Section \ref{main} we introduce the coerciveness condition for a quasi-equilibrium problem and we present our main results. Finally, in Section \ref{applications}
we consider some applications on quasi-variational inequality problems and generalized Nash equilibrium problems.

\section{Preliminaries and basic results}\label{preliminaries}

Let $S$ be a subset of $\R^n$. The convex hull and the closure of $S$ will be denoted by $\co(S)$ and
$\overline{S}$, respectively. We denote the open ball and the closed ball in
$\R^n$ with centre $0$ and radius $\varepsilon>0$ by $B_\varepsilon$ and $\overline{B}_\varepsilon$, respectively.

Let us now recall some classical definitions of generalized convexity. A real-valued function $h:\R^n\to\R$ is said to be 
\begin{itemize}
\item \emph{convex} if, for any $x,y\in\R^n$ and $t\in[0,1]$, we have
\[
h(tx+(1-t)y)\leq th(x)+(1-t)h(y);
\]

\item \emph{quasi-convex} if, for any $x,y\in \R^n$ and $t\in[0,1]$,
we have
\[
h(tx+(1-t)y)\leq \max\{h(x),h(y)\};
\]
\item \emph{semi-strictly quasi-convex} if, it is quasi-convex and, for any $x,y\in \R^n$ such that $h(x)\neq h(y)$ 
the following holds
\[
h(tx+(1-t)y)< \max\{h(x),h(y)\}\mbox{ for all }t\in]0,1[.
\]
\end{itemize}
It is clear that every convex function is semi-strictly quasi-convex.
A relevant and useful characterisation of quasi-convexity is that a function is quasi-convex if and only if, its  sub-level sets are convex.
A good reference for quasi-convex functions and quasi-convex optimisation is \cite{Aussel-book}.

Let $K:X\tos Y$ be a set-valued map with $X$ and $Y$ two topological spaces.
The map $K$ is called:
\begin{itemize}
 \item \emph{closed} if its graph is a closed subset of $X\times Y$,
 \item \emph{lower semi-continuous at $x_0$} if for each open set $V$ such that $K(x_0)\cap V\neq\emptyset$
 there exists a neighbourhood $U$ of $x_0$ such that $K(x)\cap V\neq\emptyset$ for all $x\in U$,
 
 \item \emph{upper semi-continuous} at $x_0$ if for any neighbourhood $V$ of $K(x_0)$, 
 there exists a neighbourhood $U$ of $x_0$ such that $K(U):=\left\{y: \exists u\in U, y\in K(u)\right\} \subset V$.
 \end{itemize}
The usual definition of lower semi-continuity of a set-valued map using sequences/nets is equivalent to the one given here using open sets (see for instance Proposition 2.5.6 in \cite{VMPOS}).

We now present some basic results about the lower semi-continuity of certain construction of set-valued maps. First we state three new lemmas, and then we recall some other basic results from the literature that we will use in the following sections.

\begin{lemma}\label{L1}
Let $X,Y$ be two topological spaces, $T:X\tos Y$ be a set-valued map, $x_0\in X$ and $V$ be an open subset of $Y$. If $T$ is lower semi-continuous at $x_0$, then the set valued map $T_V:X\tos Y$ defined by
\[
T_V(x):=T(x)\cap V,\quad x\in X
\]
is lower semi-continuous at $x_0$.
\end{lemma}
\begin{proof}
Let $V_1$ be an open subset of $Y$ such that $T_V(x_0)\cap V_1\neq\emptyset$. We put $V_2:=V_1\cap V$, which is open. Since $T_V(x_0)\cap V_1=T(x_0)\cap V_2$, by lower semi-continuity of $T$, there exists a neighbourhood $U$ of $x_0$ such that $T(x)\cap V_2\neq\emptyset$ for all $x\in U$, or equivalently
$T_V(x_0)\cap V_1\neq\emptyset$ for all $x\in U$, so that $T_V$ is lower semi-continuous at $x_0$.
\end{proof}
%Another important result is stated bellow.
\begin{lemma}\label{L2}
Let $X,Y$ be two topological spaces, $T:X\tos Y$ be a set-valued map and $x_0\in X$. If $T$ is lower semi-continuous at $x_0$, then any set-valued map $S:X\tos Y$ satisfying
\[
T(x)\subset S(x)\subset \overline{T(x)}, \quad \forall x\in X
\]
is lower semi-continuous at $x_0$.
\end{lemma}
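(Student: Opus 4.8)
The plan is to verify the open-set definition of lower semi-continuity for $S$ at $x_0$ directly. So I fix an arbitrary open set $V\subset Y$ with $S(x_0)\cap V\neq\emptyset$, and I aim to produce a neighbourhood $U$ of $x_0$ such that $S(x)\cap V\neq\emptyset$ for every $x\in U$. The whole idea is to transfer this requirement from $S$ to $T$, at the point $x_0$ via the upper inclusion $S(x_0)\subset\overline{T(x_0)}$, and at the nearby points $x$ via the lower inclusion $T(x)\subset S(x)$.

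First I would pick a point $y\in S(x_0)\cap V$. Since $y\in S(x_0)\subset\overline{T(x_0)}$ and $V$ is an \emph{open} neighbourhood of $y$, the set $V$ must meet $T(x_0)$; that is, $T(x_0)\cap V\neq\emptyset$. This is the only genuinely load-bearing step, and it is exactly where the openness of $V$ together with the closure in the inclusion $S(x_0)\subset\overline{T(x_0)}$ is used: a point in the closure of a set cannot be separated from it by an open neighbourhood.

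Next I would invoke the lower semi-continuity of $T$ at $x_0$ applied to the open set $V$: since $T(x_0)\cap V\neq\emptyset$, there exists a neighbourhood $U$ of $x_0$ with $T(x)\cap V\neq\emptyset$ for all $x\in U$. Finally, using the lower inclusion $T(x)\subset S(x)$, I get $\emptyset\neq T(x)\cap V\subset S(x)\cap V$ for every $x\in U$, which is precisely the conclusion that $S$ is lower semi-continuous at $x_0$.

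I do not expect any real obstacle here; the argument is short and the only point requiring care is the passage from $S(x_0)\cap V\neq\emptyset$ to $T(x_0)\cap V\neq\emptyset$, which relies on $V$ being open and on the closure appearing on the right of $S(x)\subset\overline{T(x)}$. Everything else is a routine application of the definition and of the inclusion $T(x)\subset S(x)$.
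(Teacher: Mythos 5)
Your proof is correct and follows exactly the same route as the paper's: pass from $S(x_0)\cap V\neq\emptyset$ to $T(x_0)\cap V\neq\emptyset$ using $S(x_0)\subset\overline{T(x_0)}$ and the openness of $V$, apply the lower semi-continuity of $T$, and conclude via $T(x)\subset S(x)$. The only difference is that you spell out the closure argument that the paper dismisses with ``Clearly,'' which is a fine expository choice.
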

\begin{proof}
Let $V$ be an open subset of $Y$ such that $S(x_0)\cap V\neq \emptyset$. Clearly, 
$\overline{T(x_0)}\cap V\neq\emptyset$. Thus the set $T(x_0)\cap V$ is non-empty. Now, by lower semi-continuity of $T$, there exists a neighbourhood $U$ of $x_0$ such that 
$\emptyset\neq T(x)\cap V\subset S(x)\cap V$, for all $x\in U$.
\end{proof}

\begin{lemma}\label{new-lemma}
	Let $T:X\tos Y$ be a set-valued map between a topological space $X$ and a topological vector space $Y$, and $V$ an open and convex subset of $Y$. If $T$ is lower semi-continuous with convex values and $T(x)\cap V\neq\emptyset$, then the set-valued map $T_{\overline{V}}$ given by $T_{\overline{V}}(x):=T(x)\cap \overline{V}$ is lower semi-continuous.
\end{lemma}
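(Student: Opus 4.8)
The plan is to realise $T_{\overline V}$ as a map squeezed between $T_V:=T(\cdot)\cap V$ and its closure, and then to invoke the two preceding lemmas. Concretely, I would fix an arbitrary $x_0\in X$ and establish lower semi-continuity of $T_{\overline V}$ there. Since $V\subset\overline V$ we trivially have $T_V(x)\subset T_{\overline V}(x)$ for every $x$, so the whole matter reduces to the reverse-type inclusion $T_{\overline V}(x)\subset\overline{T_V(x)}$, that is, $T(x)\cap\overline V\subset\overline{T(x)\cap V}$.

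For this inclusion, fix $x$ and take any $z\in T(x)\cap\overline V$. By hypothesis $T(x)\cap V\neq\emptyset$, so choose $w\in T(x)\cap V$. The idea is to approach $z$ along the half-open segment $[w,z)$. Since $V$ is open and convex, the classical line-segment (accessibility) principle for convex sets in a topological vector space gives $(1-t)w+tz\in V$ for every $t\in[0,1)$; and since $T(x)$ is convex and contains both $w$ and $z$, these same points lie in $T(x)$. Hence $(1-t)w+tz\in T(x)\cap V$ for $t\in[0,1)$, and letting $t\to1$ these points converge to $z$, so $z\in\overline{T(x)\cap V}$. This proves $T_{\overline V}(x)\subset\overline{T_V(x)}$.

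With the sandwich $T_V(x)\subset T_{\overline V}(x)\subset\overline{T_V(x)}$ in hand, I would conclude as follows. Lemma \ref{L1} ensures that $T_V$ is lower semi-continuous at $x_0$, being the intersection of the lower semi-continuous map $T$ with the open set $V$. Then Lemma \ref{L2}, applied with $T_V$ in the role of the lower semi-continuous map and $T_{\overline V}$ in the role of the sandwiched map $S$, yields lower semi-continuity of $T_{\overline V}$ at $x_0$. As $x_0$ was arbitrary, $T_{\overline V}$ is lower semi-continuous.

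The main obstacle is precisely the inclusion $T(x)\cap\overline V\subset\overline{T(x)\cap V}$: it is exactly where both convexity assumptions (on the values of $T$ and on $V$) and the nonemptiness hypothesis $T(x)\cap V\neq\emptyset$ are used, through the line-segment principle. Note that the hypothesis is needed at every $x$, since otherwise $\overline{T_V(x)}=\emptyset$ would force the sandwich to collapse while $T(x)\cap\overline V$ could still be nonempty. Everything else is a bookkeeping reduction to Lemmas \ref{L1} and \ref{L2}. If one prefers not to quote the accessibility principle, the fact $(1-t)w+tz\in V$ can be proved directly: choose a balanced neighbourhood $W$ of the origin with $w+W\subset V$, use $z\in\overline V$ to approximate $z$ by a point of $V$ within a suitably scaled copy of $W$, and recombine to exhibit $(1-t)w+tz$ as a convex combination of two points of $V$.
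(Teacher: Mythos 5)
Your proposal is correct and follows essentially the same route as the paper: both sandwich $T_{\overline V}$ between $T_V=T(\cdot)\cap V$ and $\overline{T_V(\cdot)}$ and then combine Lemmas \ref{L1} and \ref{L2}, the key inclusion $T(x)\cap\overline V\subset\overline{T(x)\cap V}$ resting on the segment $[w,z)$ lying in $T(x)\cap V$. If anything, you are more careful than the paper, which justifies this inclusion with a bare ``by convexity,'' whereas you make explicit that it uses the accessibility principle for the open convex set $V$ together with the convexity of $T(x)$, and you correctly observe that the nonemptiness hypothesis is needed at every $x$.
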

\begin{proof}
First, by Lemma 2.1 we know that the set-valued map $S: X\tos Y$ defined by $S(x):=T(x)\cap V$, is lower semi-continuous. 
Second, we observe that $S(x)\subset T_{\overline{V}}\subset \overline{S(x)}$ for each $x\in X$. In fact, to prove the second inclusion (the first being trivial), take $y\in T_{\overline{V}}(x)$. Take also a point $y_0\in S(x)$. By convexity we have that the interval $]y_0,y[$ is included in $S(x)$, and then obviously $y\in \overline{S(x)}$, which prove the inclusion. Finally, we conclude by applying Lemma 2.2 that $T_{\overline{V}}$ is lower semi-continuous.
\end{proof}
\begin{remark}
	Lemma \ref{new-lemma} is a slight refinement of Lemma 1 in \cite{Au-Su}, mainly because it drops two assumptions: the values of $T$ need not to be closed, and the values of $T_{\overline{V}}$ need not to have non-empty interior.
\end{remark}

\begin{lemma}[Lemma 2.3 in \cite{Naselli}]\label{L3}
Let $X,Y$ be two topological spaces and $A$ be a closed subset of $X$. Consider two lower semi-continuous set-valued maps $T:X\tos Y$, $S:A\tos Y$ such that, for every $x\in A$, one has
$S(x)\subset T(x)$. Let $J:X\tos Y$ be defined as
\[
J(x):=\left\lbrace\begin{array}{cc}
T(x),&\mbox{if }x\in X\setminus A\\
S(x),&\mbox{if }x\in A.
\end{array}\right.
\]
Then, the set-valued map $J$ is lower semi-continuous.
\end{lemma}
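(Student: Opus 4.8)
The plan is to verify lower semi-continuity of $J$ directly from the open-set definition, checking it at an arbitrary point $x_0\in X$. That is, I fix an open set $V\subset Y$ with $J(x_0)\cap V\neq\emptyset$ and seek a neighbourhood $U$ of $x_0$ with $J(x)\cap V\neq\emptyset$ for all $x\in U$. The natural move is to split the argument according to whether $x_0$ lies in the closed set $A$ or in its complement, since on each of these regions $J$ is described by a single one of the two maps.

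The case $x_0\in X\setminus A$ should be routine: because $A$ is closed, $X\setminus A$ is an open neighbourhood of $x_0$ on which $J$ agrees with $T$. Since $J(x_0)\cap V=T(x_0)\cap V\neq\emptyset$, lower semi-continuity of $T$ at $x_0$ yields a neighbourhood $U_1$ with $T(x)\cap V\neq\emptyset$ on $U_1$; intersecting with $X\setminus A$ gives a neighbourhood $U$ on which $J=T$ meets $V$, as desired.

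The interesting case is $x_0\in A$, where $J(x_0)=S(x_0)$ and nearby points may lie either inside or outside $A$, so both maps are involved. First I would use lower semi-continuity of $S$ (as a map on $A$ with the subspace topology): from $S(x_0)\cap V\neq\emptyset$ I obtain a relative neighbourhood of $x_0$ in $A$, which I write as $U'\cap A$ for some open $U'\subset X$, on which $S\cap V\neq\emptyset$. This controls the points of $U'$ that fall in $A$. To control the points outside $A$, I invoke the containment $S(x_0)\subset T(x_0)$, which forces $T(x_0)\cap V\supset S(x_0)\cap V\neq\emptyset$; lower semi-continuity of $T$ at $x_0$ then gives a neighbourhood $U_T\subset X$ with $T(x)\cap V\neq\emptyset$ on $U_T$. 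Setting $U:=U'\cap U_T$ and examining $x\in U$ by the dichotomy $x\in A$ (use $S$) or $x\notin A$ (use $T$, since there $J=T$), I conclude $J(x)\cap V\neq\emptyset$ throughout $U$.

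The step I expect to require the most care is the treatment of $x_0\in A$: one must correctly pass between the relative neighbourhood supplied by the lower semi-continuity of $S$ on the subspace $A$ and genuine neighbourhoods in $X$, and one must recognise that the hypothesis $S(x)\subset T(x)$ is exactly what is needed to guarantee $T(x_0)\cap V\neq\emptyset$ so that $T$ can govern the points of $U$ lying outside $A$. Without this containment the gluing could fail, so it is the pivotal ingredient rather than a technicality.
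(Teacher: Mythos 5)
Your proof is correct and complete: the case split on $x_0\in X\setminus A$ versus $x_0\in A$, the passage from the relative neighbourhood $U'\cap A$ supplied by the lower semi-continuity of $S$ on the subspace $A$ to the genuine neighbourhood $U=U'\cap U_T$ in $X$, and the use of $S(x_0)\subset T(x_0)$ to guarantee $T(x_0)\cap V\neq\emptyset$ are all exactly right (note that only the containment at the single point $x_0$ is ever needed). The paper itself gives no proof of this lemma, citing it as Lemma 2.3 of Ricceri \cite{Naselli}, so there is no in-paper argument to compare against; your direct open-set verification is the standard one and fills that gap correctly.
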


The following result is part of Theorem 5.9 in \cite{RW}.
\begin{lemma}\label{L4}
Let $T:\R^n\tos\R^m$ be a set-valued map. If $T$ is lower semi-continuous, then so is the set-valued map 
$\co(T):\R^n\tos\R^m$ defined by
\[
\co(T)(x):=\co(T(x)).
\]
\end{lemma}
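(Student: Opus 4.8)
The plan is to use the sequential characterisation of lower semi-continuity, which is available here since $\R^n$ and $\R^m$ are metric spaces and, as noted above, is equivalent to the open-set definition. Concretely, $\co(T)$ is lower semi-continuous at a point $x_0$ if and only if, for every sequence $x_k\to x_0$ and every $y_0\in\co(T(x_0))$, there exist $y_k\in\co(T(x_k))$ with $y_k\to y_0$. I would verify this last condition directly, for an arbitrary $x_0$.

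First I would exploit the finite dimension of $\R^m$ through Carathéodory's theorem: any $y_0\in\co(T(x_0))$ admits a representation $y_0=\sum_{i=1}^{m+1}\lambda_i y_0^i$, where $y_0^i\in T(x_0)$, $\lambda_i\geq 0$ and $\sum_{i=1}^{m+1}\lambda_i=1$. The decisive point is that the number of terms is bounded by the fixed constant $m+1$, independently of $y_0$ and of $x_0$; this is what permits a uniform construction along the sequence.

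Next, fixing a sequence $x_k\to x_0$, I would apply the lower semi-continuity of $T$ to each of the finitely many points $y_0^1,\dots,y_0^{m+1}$ separately: for each $i$ there exist $y_k^i\in T(x_k)$ with $y_k^i\to y_0^i$ as $k\to\infty$. Setting $y_k:=\sum_{i=1}^{m+1}\lambda_i y_k^i$, we have $y_k\in\co(T(x_k))$ for every $k$, since the $\lambda_i$ form a convex combination. By continuity of the affine map $(z^1,\dots,z^{m+1})\mapsto\sum_{i=1}^{m+1}\lambda_i z^i$, we obtain $y_k\to\sum_{i=1}^{m+1}\lambda_i y_0^i=y_0$, which is exactly what is required.

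The step that deserves the most care is the uniform bound on the number of terms: without Carathéodory's theorem one could only write $y_0$ as a convex combination of a possibly unbounded set of points of $T(x_0)$, and then there would be no way to keep the coefficients fixed and pass to the limit termwise. Everything else — invoking lower semi-continuity finitely many times and using continuity of a fixed convex combination — is routine. Since $x_0$ was arbitrary, this shows $\co(T)$ is lower semi-continuous.
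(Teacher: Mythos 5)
Your proof is correct, but it differs from the paper in an essential way: the paper gives no proof at all for this lemma, simply citing it as part of Theorem 5.9 in Rockafellar--Wets \cite{RW}. Your argument is the standard direct one via the sequential characterisation of lower semi-continuity (legitimate here, as the paper itself notes the equivalence with the open-set definition in metric spaces), and it is sound: fix a finite convex representation of $y_0$, lift each of the finitely many generators along the sequence $x_k$ using the lower semi-continuity of $T$, and pass to the limit in the fixed convex combination. One remark, though, on what you call the ``decisive point'': Carath\'eodory's theorem is not actually needed, and your claim that without it $y_0$ might only be a convex combination of ``a possibly unbounded set of points'' is mistaken. By definition, $\co(T(x_0))$ consists of \emph{finite} convex combinations of points of $T(x_0)$; since lower semi-continuity is verified one target point $y_0$ at a time, any finite representation---of length depending on $y_0$, with no uniform bound required---suffices for your construction. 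Carath\'eodory's uniform bound $m+1$ would matter if you needed to extract convergent subsequences of representations (as in proofs of upper semi-continuity or closedness statements), but not here. The practical upshot is that your argument, once this is noted, proves the lemma in any topological vector space, whereas the paper's citation confines it to the finite-dimensional setting of \cite{RW}. A last small point of hygiene: in the sequential characterisation one should say $y_k\in\co(T(x_k))$ for all $k$ sufficiently large, since $T(x_k)$ could a priori be empty for finitely many $k$; lower semi-continuity guarantees $T(x_k)\neq\emptyset$ eventually whenever $T(x_0)\neq\emptyset$, so this does not affect the argument.
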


Given a set-valued map $T:X\tos X$, a point $x\in X$ is said to be a \emph{fixed point} of $T$ if, $x\in T(x)$. We denote $\fix(T)$ the set of fixed points of $T$. We state below Himmelberg's well-known fixed point theorem. 
\begin{theorem}[Theorem 2 in \cite{Himmelberg}]\label{FPT}
Let $A$ be a non-empty and convex subset of a Hausdorff, locally convex topological vector space $Y$, and let $T:A\tos A$ be a set-valued map. If $T$ is upper semi-continuous with convex, closed and non-empty values, and $T(A)$ is contained in some compact subset $N$ of $A$, then $\fix(T)$ is a non-empty set.
\end{theorem}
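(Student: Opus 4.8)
The plan is to deduce this statement from its compact counterpart, the Kakutani--Fan--Glicksberg fixed point theorem, which asserts that an upper semi-continuous self-map with non-empty closed convex values on a \emph{compact} convex subset of a Hausdorff locally convex space admits a fixed point. The whole point is therefore to manufacture, out of the given data, a compact convex set on which $T$ becomes a genuine self-map, and then to transfer the compact result to it.

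First I would set $C:=\overline{\co(N)}$, the closed convex hull of the compact set $N$, and check three properties. (i) $C$ is convex by construction. (ii) $C\subseteq A$: since $A$ is convex and $N\subseteq A$, one has $\co(N)\subseteq A$; in $\R^n$ the set $\co(N)$ is already closed, so $C=\co(N)\subseteq A$. (iii) $C$ is compact: in $\R^n$ this is Carath\'eodory's theorem, that the convex hull of a compact set is compact. Crucially, one then has $T(C)\subseteq T(A)\subseteq N\subseteq C$, so that $T$ restricts to a map $T|_C:C\tos C$.

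Next I would verify that $T|_C$ meets the hypotheses of the compact theorem on $C$. Its values are non-empty, convex and closed by assumption, and moreover each $T(x)$ is a closed subset of the compact set $N$, hence compact; upper semi-continuity is inherited from $T$ upon restricting the domain to $C\subseteq A$. The compact fixed point theorem then yields $x_0\in C$ with $x_0\in T(x_0)$, and since $C\subseteq A$ this is the desired fixed point, so that $\fix(T)\neq\emptyset$.

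The main obstacle is entirely concentrated in the construction of $C$, i.e. in guaranteeing that a compact convex set can be intercalated between $N$ and $A$. In $\R^n$ this is painless, as above. In a general locally convex space two difficulties surface: the closed convex hull of a compact set need not be compact without a (quasi-)completeness assumption, and if $A$ is not closed then $\overline{\co(N)}$ may leave $A$. The robust way around the latter is to avoid taking closures inside $A$: one works with convex hulls of finitely many points of $N$, which span finite-dimensional simplices contained in $A$, approximates $T$ by continuous single-valued maps via partitions of unity subordinate to finite open covers of $N$, applies Brouwer's theorem on each simplex, and finally passes to the limit using the compactness of $N$ together with the upper semi-continuity and closedness of the values of $T$. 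This limiting argument, rather than the algebra, is where the real work lies.
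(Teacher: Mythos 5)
The paper offers no proof of this statement at all: it is imported verbatim as Theorem 2 of \cite{Himmelberg}, so there is no in-paper argument to compare yours against, and I can only judge the proposal on its own merits. Your main line of argument is complete and correct in $\R^n$: with $C=\co(N)$ compact (Carath\'eodory), convex, contained in $A$, and $T(C)\subseteq N\subseteq C$, the Kakutani--Fan--Glicksberg theorem applies directly, and since every use of Theorem \ref{FPT} in this paper happens in $\R^n$ (through Proposition \ref{JoJa}), this already covers everything the paper needs. For the general locally convex statement you correctly identify that the reduction breaks down --- $\overline{\co}(N)$ need not be compact without quasi-completeness, and need not stay inside $A$ when $A$ is not closed --- and the fallback you sketch is in substance the classical proof (essentially Himmelberg's own, and Fan--Glicksberg's): cover $N$ by finitely many $U$-small neighbourhoods of points $y_1,\dots,y_m\in N$, use a subordinate partition of unity to build a continuous map $f_U$ with values in $S:=\co\{y_1,\dots,y_m\}\subseteq\co(N)\subseteq A$, apply Brouwer to $f_U|_S:S\tos S$ (note: to this one hull $S$, not to ``each simplex'' separately), and pass to the limit along the net of neighbourhoods $U$. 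Two technical steps are asserted rather than proved and would need to be written out for a self-contained argument: first, that upper semi-continuity and convexity of the values genuinely yield the graph-approximation property of $f_U$ (Cellina-type approximate selection, where convexity of $T(x)$ is indispensable); second, the limit argument, which requires that approximate fixed points $x_U$ are $U$-close to points of the compact set $N$, hence subnet-converge to some $y_0\in N\subseteq A$, and that $T$, being upper semi-continuous with closed values into a compact set, has closed graph, giving $y_0\in T(y_0)$. So: right strategy, correct where detailed, with the hard analytic core of the infinite-dimensional case left as an announced but unexecuted step.
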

As a consequence of Himmelberg's fixed point theorem and one of Michael's famous selection theorems we obtain the following proposition.
\begin{proposition}[Corollary 1 in \cite{JC-JZ-2018}]\label{JoJa}
Given a non-empty, convex and closed subset $C$ of $\R^n$, if $T:C\tos C$ is lower semi-continuous with non-empty and convex values, and $T(C)$ is bounded; then $\fix(T)$ is a non-empty set.
\end{proposition}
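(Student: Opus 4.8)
The plan is to combine a continuous selection argument with Himmelberg's fixed point theorem (Theorem \ref{FPT}), exactly as the attribution to Michael's selection theorem suggests. The idea is to replace the set-valued map $T$ by a continuous single-valued selection, and then locate a fixed point of this selection, which will automatically be a fixed point of $T$.

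First I would record that $C$, being a closed convex subset of $\R^n$, is a metric space and hence paracompact, so the ambient hypotheses needed to run a selection theorem are available. Since $T$ is lower semi-continuous with non-empty convex values, I would invoke the finite-dimensional form of Michael's selection theorem---the one that, for maps into $\R^n$, does not require the values to be closed---to produce a continuous map $g:C\to\R^n$ with $g(x)\in T(x)$ for every $x\in C$. Because $T(x)\subset C$, this selection in fact takes its values in $C$, i.e. $g:C\to C$.

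Next I would check the hypotheses of Theorem \ref{FPT} for the single-valued map $g$, viewed as a set-valued map $x\mapsto\{g(x)\}$ with (trivially) non-empty, convex and closed values. Continuity of $g$ gives upper semi-continuity. For the compactness requirement, note that $g(C)\subset T(C)$; since $T(C)$ is bounded, its closure $N:=\overline{T(C)}$ is a compact subset of $\R^n$, and since $T(C)\subset C$ with $C$ closed we have $N\subset C$. Thus $g(C)\subset N\subset C$ with $N$ compact, so Himmelberg's theorem yields a point $x_0\in C$ with $x_0=g(x_0)$. As $g(x_0)\in T(x_0)$, this gives $x_0\in T(x_0)$, whence $\fix(T)\neq\emptyset$.

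The delicate point is the selection step. The classical Michael selection theorem requires closed convex values, whereas here the values of $T$ are only assumed convex. Passing to the closure map $\overline{T}(\cdot):=\overline{T(\cdot)}$, which is lower semi-continuous by Lemma \ref{L2} and has closed convex values, would only produce a selection---and hence a fixed point---lying in $\overline{T(x_0)}$ rather than in $T(x_0)$, which is too weak for the stated conclusion. The crux is therefore to exploit the finite-dimensionality of the range: in $\R^n$ a lower semi-continuous map with non-empty convex values admits a continuous selection even without closedness, and it is this refined version of Michael's theorem that keeps the selection, and thus the fixed point, inside $T(x_0)$ itself.
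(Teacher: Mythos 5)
Your proof is correct and follows precisely the route the paper indicates: Proposition \ref{JoJa} is stated there as ``a consequence of Himmelberg's fixed point theorem and one of Michael's famous selection theorems'' (citing Corollary 1 of \cite{JC-JZ-2018}), and your argument carries out exactly that derivation, including the essential point that the finite-dimensional refinement of Michael's theorem yields a continuous selection of $T$ itself despite the values not being closed. The verification of Theorem \ref{FPT} for the selection, with $N:=\overline{T(C)}\subset C$ compact, is also sound.
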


Given a set-valued map $T:X\tos Y$ and given $y\in Y$, the \emph{fibre} of $T$ at $y$ is the set
\[
T^{-1}(y):=\{x\in X:~y\in T(x)\}.
\]
The following result corresponds to Theorem 4 of Chapter 5 in \cite{Lassonde-b}, but here the open graph is replaced by open fibres and the proof if the same.
\begin{theorem}\label{Lassonde-criteria}
Let $C$ be a compact, convex and non-empty subset of a locally convex topological vector space and
$S,T:C\tos C$ be two set-valued maps. If the following assumptions hold
\begin{enumerate}
\item $S$ is upper semi-continuous with convex, compact and non-empty values,
\item $T$ is convex-valued with open fibres and $\fix(T)=\emptyset$,
\item the set $\{x\in C:~S(x)\cap T(x)\neq\emptyset\}$ is open;
\end{enumerate}
then there exists $x\in\fix(S)$ such that $S(x)\cap T(x)=\emptyset$.
\end{theorem}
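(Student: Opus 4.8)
The plan is to reduce the whole statement to a single fixed-point assertion for a glued map. Write
\[
M:=\{x\in C:~S(x)\cap T(x)\neq\emptyset\},\qquad N:=C\setminus M,
\]
so that $M$ is open by assumption~3 and $N$ is compact. On $N$ one has $S(x)\cap T(x)=\emptyset$ by definition, hence the desired conclusion amounts exactly to producing a fixed point of $S$ that lies in $N$. Observe also that $M\subseteq\{x:~T(x)\neq\emptyset\}$, since on $M$ we have $T(x)\supseteq S(x)\cap T(x)\neq\emptyset$.

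I would then introduce the map
\[
\Psi(x):=
\begin{cases}
T(x), & x\in M,\\
S(x), & x\in N,
\end{cases}
\]
which is convex-valued (both branches are convex) and non-empty-valued everywhere (on $M$ because $T(x)\neq\emptyset$, on $N$ because $S$ has non-empty values). The point of this construction is that \emph{any} fixed point of $\Psi$ already finishes the proof: if $x\in M$ with $x\in\Psi(x)=T(x)$, then $x\in\fix(T)$, contradicting assumption~2; therefore every fixed point of $\Psi$ must lie in $N$, where $\Psi=S$, and such a point belongs to $\fix(S)$ and satisfies $S(x)\cap T(x)=\emptyset$. Thus the entire theorem is reduced to the existence of a fixed point of $\Psi$.

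The main obstacle is that $\Psi$ is assembled from two maps of opposite continuity type: $S$ is upper semi-continuous with closed, convex, compact values (precisely the setting of Himmelberg's Theorem~\ref{FPT}), whereas $T$, having open fibres, is lower semi-continuous, so $\Psi$ is in general neither upper nor lower semi-continuous and no fixed-point theorem applies to it directly. To overcome this I would exploit the open-fibre structure of $T$ on the open set $M$: choosing points $y_i$ and a partition of unity $\{\phi_i\}$ subordinate to the cover of a neighbourhood of $\overline{M}\cap\{T\neq\emptyset\}$ by the open fibres $T^{-1}(y_i)$ (available since the compact Hausdorff space $C$ is paracompact), the continuous map $f(x):=\sum_i\phi_i(x)\,y_i$ is a \emph{genuine} selection of $T$, because wherever $\phi_i(x)>0$ one has $x\in T^{-1}(y_i)$, i.e. $y_i\in T(x)$, and $T(x)$ is convex; this is exactly what is needed to contradict $\fix(T)=\emptyset$ and, crucially, avoids the pitfall of landing only in $\overline{T(x)}$. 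Combining this continuous selection with the Kakutani map $S$ by a convex combination steered by a continuous Urysohn function equal to $1$ on $N$ produces an upper semi-continuous map with convex, compact, non-empty values to which Theorem~\ref{FPT} applies. The delicate point I expect to be hardest is the reconciliation across the transition region near $\partial M$: one must arrange the combination so that the resulting fixed point cannot sit at a non-trivial convex mixture, but is forced either to be a genuine fixed point of $T$ in $M$ (excluded by assumption~2) or a fixed point of $S$ in $N$, which is the sought-after point.
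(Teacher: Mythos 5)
First, a point of reference: the paper does not actually prove this theorem. It is quoted from Lassonde (Theorem 4, Chapter 5 of \cite{Lassonde-b}) with the remark that replacing open graph by open fibres leaves the proof unchanged, so your attempt has to be judged as a standalone argument. Your opening reduction is correct: with $M=\{x:S(x)\cap T(x)\neq\emptyset\}$ and $N=C\setminus M$, any fixed point of the glued map $\Psi$ would finish the proof, and your observation that $\Psi$ mixes an upper semi-continuous branch with a lower semi-continuous one, so that no standard fixed-point theorem applies to it, is exactly right. The Browder-type selection is also sound where it is available: if $\{\phi_i\}$ is a partition of unity with $\operatorname{supp}\phi_i\subset T^{-1}(y_i)$, then $f(x)=\sum_i\phi_i(x)y_i\in T(x)$ by convexity of the values, a genuine (not approximate) selection, as you note.

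The genuine gap is the transition case, which you name but do not resolve — and it is precisely where the entire content of the theorem lies. If $F(x)=u(x)S(x)+(1-u(x))\{f(x)\}$ with a Urysohn function $u$ equal to $1$ on $N$, then Himmelberg's theorem (Theorem \ref{FPT}) gives a fixed point $x^*$, but when $0<u(x^*)<1$ one only obtains $x^*\in\co\bigl(S(x^*)\cup\{f(x^*)\}\bigr)$, which contradicts neither $\fix(T)=\emptyset$ nor places $x^*$ in $\fix(S)$; the closing sentence ``one must arrange the combination so that the resulting fixed point cannot sit at a non-trivial convex mixture'' is a statement of the problem, not an argument, and the natural repairs fail: steering by $u$ cannot exclude the mixed regime, and enlarging the construction by Hahn--Banach separation on $C\setminus\fix(S)$ (open sets $\{z:\sup_{y\in S(z)}p(y)<p(z)\}$) still leaves the mixed case open because nothing controls the selection values $y_i$ against the separating functionals. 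A telling symptom is that assumption 3, the openness of $M$, which must enter essentially (your sketch uses it only to make $N$ compact), does almost no work in your plan. There are also secondary flaws you would need to fix even if the main gap were closed: a partition of unity subordinate to an infinite cover of the (generally non-compact) set $\overline{M}\cap\{x:T(x)\neq\emptyset\}$ is not available in an arbitrary locally convex space, since that subspace need not be paracompact — the correct move is to cover only the compact set $\fix(S)$ (non-empty by Theorem \ref{FPT}, which also requires the implicitly assumed Hausdorff separation) by finitely many fibres; and requiring $u=1$ exactly on $N$ and $u<1$ off $N$ needs $N$ to be a zero set, which Urysohn's lemma alone does not provide outside the metrizable setting.
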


We now recall some different notions of \emph{generalized monotonicity}. 

A function $f:\R^n\times \R^n\rightarrow \R$, is said to be:
\begin{itemize}
\item \emph{pseudo-monotone} on a subset $C$ of $\R^n$ if, for all $x,y\in C$ the following implication holds
\[
f(x,y)\geq0\Rightarrow f(y,x)\leq0;
\]
\item \emph{quasi-monotone} on a subset $C$ of $\R^n$ if, for all $x,y\in C$ the following implication holds
\[
f(x,y)>0\Rightarrow f(y,x)\leq0;
\]
\item \emph{properly quasi-monotone} on a convex subset $C$ of $\R^n$ if
for all $x_1,\dots, x_m\in C$, and all $x\in \co(\{ x_1,\dots,x_m\} )$
\[
\min_{i=1,\dots,m}f(x_i,x)\leq0.
\]
\end{itemize}

These notions of generalized monotonicity for functions are inspired from some corresponding ones for set-valued maps. 
Nevertheless, it is important to note that some of the relations between notions of generalized monotonicity for set-valued maps are not longer true for functions. In fact, neither pseudo-monotonicity implies proper quasi-monotonicity, nor the last one implies quasi-monotonicity in general (see the examples in \cite{BP01}). 

Another important concept that we will consider in this paper is the upper sign property for functions.
\begin{itemize}
\item A function $f:\R^n\times\R^n\to\R$ is said to have the 
\emph{upper sign property} on $C$ if for all $x,y\in C$ 
the following implication holds:
\begin{align}\label{upper}
\bigl(
f(x_t,x)\leq0,~
\forall~ t\in\,]0,1[~
\bigr)
\Rightarrow ~ f(x,y) \geq0.
\end{align}
\end{itemize}

Recently in \cite{JC-JZ-2018}, the authors showed that under suitable assumptions the upper sign property of $f$ is equivalent to the pseudo-monotonicity of $-f$. 

%============
\section{Main results}\label{main}

Let $f:\R^n\times \R^n\to\R$ be a function, $C$ be a non-empty subset of $\R^n$ and  $K:C\tos C$ be a set-valued map. 

\begin{definition}
The quasi-equilibrium problem associated to $f$ and $K$ is said to satisfy the \emph{uniform coerciveness condition} (UCC) if
the following two conditions hold:
\begin{enumerate}
 \item there exists $\rho>0$ such that $K(w)\cap B_\rho\neq\emptyset$, for all $w\in C$,
 \item for each $z\in \fix(K)$, there exists $\rho_z\in]0,\rho[$ such that
\begin{align*}
\forall x\in K(z)\cap\overline{B}_\rho\setminus \overline{B}_{\rho_z},~\exists y\in K(z) \mbox{ with }\|y\|<\|x\|
\mbox{ and }f(x,y)\leq0.
\end{align*}
\end{enumerate}
The positive number $\rho$ will be called \emph{coercive radius} of the quasi-equilibrium problem.
\end{definition}

We now denote by $\qep(f,K)$ and $\mqep(f,K)$ the solution sets of problems \eqref{QEP} and \eqref{MQEP}, respectively.

In the spirit of Proposition 1 in \cite{Au-Su}, we will show that the second condition of UCC holds for each element of $\qep(f,K)$ under generalized monotonicity.
\begin{proposition}\label{pseudo-coercive}
Let $C$ be a non-empty subset of $\R^n$, $K:C\tos C$ be a set-valued map and $f:\R^n\times\R^n\to\R$ be a function. The following hold:
\begin{enumerate}
\item If $f$ is pseudo-monotone , then for all $z\in \qep(f,K)$ there exists $\rho_z>0$ such that
\begin{align*}
\forall x\in K(z)\setminus \overline{B}_{\rho_z},~\exists y\in K(z) \mbox{ with }\|y\|<\|x\|
\mbox{ and }f(x,y)\leq0.
\end{align*} 
\item If the map $K$ is convex-valued and $f$ has the upper sign property, then for all $z\in\mqep(f,K)$ there exists $\rho_z>0$ such that
\begin{align*}
\forall x\in K(z)\setminus \overline{B}_{\rho_z},~\exists y\in K(z) \mbox{ with }\|y\|<\|x\|
\mbox{ and }f(y,x)\geq0.
\end{align*} 
\end{enumerate}
\end{proposition}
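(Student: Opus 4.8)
The plan is to prove both statements with the same witness: for a solution $z$ I will show that the point $y=z\in K(z)$ itself fulfils the required conclusion as soon as $x$ is taken far enough from the origin. Accordingly, I will set $\rho_z$ to be any positive number with $\rho_z\geq\|z\|$ (for instance $\rho_z=\|z\|+1$, which also handles the case $z=0$), so that every $x\in K(z)\setminus\overline{B}_{\rho_z}$ automatically satisfies $\|z\|<\|x\|$. With this choice the norm requirement $\|y\|<\|x\|$ is met by $y=z$, and it only remains to verify the appropriate sign condition on $f$ in each case.

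For the first statement, let $z\in\qep(f,K)$, so that $z\in K(z)$ and $f(z,x)\geq0$ for every $x\in K(z)$. Fixing an arbitrary $x\in K(z)\setminus\overline{B}_{\rho_z}$ and applying pseudo-monotonicity of $f$ to the pair $(z,x)$, the inequality $f(z,x)\geq0$ yields $f(x,z)\leq0$. Hence $y=z$ satisfies both $\|y\|<\|x\|$ and $f(x,y)\leq0$, which is exactly what is claimed. This part is essentially immediate, using only the definition of a (QEP) solution and pseudo-monotonicity.

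For the second statement, let $z\in\mqep(f,K)$, so that $z\in K(z)$ and $f(y,z)\leq0$ for every $y\in K(z)$. Fix $x\in K(z)\setminus\overline{B}_{\rho_z}$; the point now is to produce the inequality $f(z,x)\geq0$, and this is where I will use the convexity of $K(z)$ together with the upper sign property. Since $z,x\in K(z)$ and $K(z)$ is convex, the whole segment $z_t:=(1-t)z+tx$ lies in $K(z)$ for $t\in[0,1]$; consequently the Minty condition gives $f(z_t,z)\leq0$ for all $t\in\,]0,1[$. Applying the upper sign property \eqref{upper} with first argument $z$ and second argument $x$ then converts this family of inequalities into $f(z,x)\geq0$. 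Thus $y=z$ works, giving $\|y\|<\|x\|$ and $f(y,x)\geq0$.

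The only genuinely delicate point is the second part, where one must correctly match the segment $z_t$ to the hypothesis of the upper sign property and verify that it stays inside $K(z)$; this is precisely where the convex-valuedness of $K$ is indispensable, in contrast to the first part, which needs no such structure.
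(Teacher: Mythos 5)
Your proof is correct and takes essentially the same approach as the paper: in both parts the witness is $y=z$ with $\rho_z>\|z\|$, and your second part (segment $]z,x[$ inside the convex set $K(z)$, Minty inequality along it, then the upper sign property to get $f(z,x)\geq0$) is verbatim the paper's argument, up to the immaterial reparameterization of the segment. For part 1 the paper only cites ``a straightforward adaptation of Proposition 1 in \cite{Au-Su}'', and your direct computation---$f(z,x)\geq0$ from the QEP condition, hence $f(x,z)\leq0$ by pseudo-monotonicity---is exactly that adaptation spelled out.
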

\begin{proof}
The first case is a straightforward adaptation of Proposition 1 in \cite{Au-Su}.

In the second case, let $z$ be an element of $\mqep(f,K)$ and take $\rho_z>\|z\|$. Then for all $x\in K(z)\setminus \overline{B}_{\rho_z}$ and $t\in]0,1[$ we have $tz+(1-t)x\in K(z)$ and thus $f(tz+(1-t)x,z)\leq 0$. We conclude that $f(z,x)\geq0$ due to the upper sign property of $f$, and the result follows from taking $y=z$.
\end{proof}

The following proposition provides conditions under which any solution of the quasi-equilibrium problem on the bounded set $\overline{B}_\rho$ is also solution of the unbounded quasi-equilibrium problem. It is an extension of Lemma 2.2 in \cite{BP05}.
\begin{proposition}\label{Pro-1}
Let $f:\R^n\times\R^n\to\R$  be a function, $C$ be a non-empty subset of $\R^n$ 
and $K:C\tos C$ be a set-valued map. Assume that the quasi-equilibrium problem associated to $f$ and $K$ 
satisfies the UCC
with coercive radius $\rho$ , and the following conditions hold
\begin{enumerate}
\item for each $x\in \fix(K)$, the following implication holds
\[
(f(x,y)\leq0~\wedge~f(x,z)<0)~\to~f(x,ty+(1-t)z)<0,\mbox{ for all }t\in]0,1[,
\]
\item $f(x,x)=0$, for all $x\in\fix(K)$,
\item $K(x)$ is convex, for all $x\in \fix(K)$.
\end{enumerate}
Then, any solution $x_0$ of $\qep(f,K_\rho)$ is also a solution of $\qep(f,K)$.
\end{proposition}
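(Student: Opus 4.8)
The plan is to argue by contradiction, reducing everything to the single missing inequality. Reading $K_\rho$ as the truncated constraint map $K_\rho(x):=K(x)\cap\overline{B}_\rho$, a solution $x_0$ of $\qep(f,K_\rho)$ satisfies $x_0\in K(x_0)$, $\|x_0\|\le\rho$, and $f(x_0,y)\ge0$ for every $y\in K(x_0)\cap\overline{B}_\rho$. In particular $x_0\in\fix(K)$, so hypotheses (1), (2), (3) and the second UCC condition are all available at $x_0$. Since the fixed-point requirement $x_0\in K(x_0)$ is already granted, the only thing left to prove is that $f(x_0,y)\ge0$ for \emph{all} $y\in K(x_0)$. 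I would therefore assume, for contradiction, that there is some $\bar y\in K(x_0)$ with $f(x_0,\bar y)<0$; because $f(x_0,\cdot)\ge0$ already holds on $K(x_0)\cap\overline{B}_\rho$, such a $\bar y$ must satisfy $\|\bar y\|>\rho$.

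The core of the argument is to produce a point $p\in K(x_0)$ that is \emph{strictly} inside the ball, $\|p\|<\rho$, and satisfies $f(x_0,p)\le0$. Once this is achieved, hypothesis (1) applied at $x=x_0$ with first argument $p$ and second argument $\bar y$ yields $f(x_0,tp+(1-t)\bar y)<0$ for all $t\in\,]0,1[$. By the convexity of $K(x_0)$ (hypothesis (3)), together with $p,\bar y\in K(x_0)$, the whole segment $[p,\bar y]$ lies in $K(x_0)$; and since $\|p\|<\rho$, continuity of the norm gives some $t\in\,]0,1[$ close to $1$ for which $\|tp+(1-t)\bar y\|<\rho$. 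For that $t$ the point $tp+(1-t)\bar y$ lies in $K(x_0)\cap\overline{B}_\rho=K_\rho(x_0)$, whence $f(x_0,tp+(1-t)\bar y)\ge0$, contradicting the strict inequality just derived. This contradiction closes the proof, so the whole difficulty is concentrated in the construction of $p$.

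I expect the boundary case $\|x_0\|=\rho$ to be the only genuine obstacle, and it is precisely where the two UCC conditions earn their keep. When $\|x_0\|<\rho$ one takes $p:=x_0$: it is strictly interior, and hypothesis (2) gives $f(x_0,x_0)=0\le0$. When $\|x_0\|=\rho$ the point $x_0$ sits on the sphere and the continuity step above fails for it, so here the second UCC condition is invoked: since $\rho_{x_0}<\rho=\|x_0\|$, we have $x_0\in K(x_0)\cap\overline{B}_\rho\setminus\overline{B}_{\rho_{x_0}}$, and the condition supplies $p\in K(x_0)$ with $\|p\|<\|x_0\|=\rho$ and $f(x_0,p)\le0$, which is exactly the strictly interior point with the correct sign needed to feed the mechanism of the previous paragraph. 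In both regimes the required $p$ exists, and substituting it into the segment argument completes the proof.
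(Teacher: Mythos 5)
Your proof is correct and follows essentially the same argument as the paper's: both derive a contradiction by combining the point supplied by the UCC (or $x_0$ itself, via $f(x_0,x_0)=0$, when $\|x_0\|<\rho$) with the hypothetical point where $f(x_0,\cdot)<0$, applying hypothesis (1) along the segment and pushing the convex combination into $K(x_0)\cap\overline{B}_\rho$. The only difference is organizational — you unify the two cases through the construction of the strictly interior point $p$, whereas the paper treats $\|x_0\|=\rho$ and $\|x_0\|<\rho$ as separate cases — and your version is, if anything, slightly more explicit about where hypotheses (2) and (3) are used.
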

\begin{proof}
Let $x_0\in \qep(f,K_\rho)$. If $\|x_0\|=\rho$, it is clear that $x_0\in K(x_0)\setminus \overline{B}_{\rho_{x_0}}$. By the UCC,
there exists $z\in K(x_0)$ with $\|z\|<\|x_0\|=\rho$ such that $f(x_0,z)\leq0$. 
If there exists $y\in K(x_0)$ such that $f(x_0,y)<0$, then 
by assumption 1 we have  $f(x_0,ty+(1-t)z)<0$ for all $t\in]0,1[$. 
However, we get a contradiction with the fact that $x_0\in \qep(f,K_\rho)$, since if we take $t$ small enough, the point $ty+(1-t)z$
is an element of $ K(x_0)\cap \overline{B}_\rho$.

Now assume that $\|x_0\|<\rho$. If there exists $y\in K(x_0)\setminus \overline{B}_\rho$ such that $f(x_0,y)<0$, we repeat the previous steps with $x_0$ in place of $z$ and we again obtain a contradiction.
\end{proof}
\begin{remark}
It is clear that assumption 1 in the previous result is fulfilled when $f$ is semi-strictly quasi-convex in its second argument. 
\end{remark}
We are ready for our main result without upper semi-continuity assumption.
\begin{theorem}\label{main-result}
Let $f:\R^n\times\R^n\to\R$  be a function, $C$ be a non-empty, convex and closed subset of $\R^n$ 
and $K:C\tos C$ be a set-valued map. If the quasi-equilibrium problem associated to $f$ and $K$ satisfies the UCC
with coercive radius $\rho$ and  the following properties hold:
\begin{itemize}
\item the map $K$ is lower semi-continuous with convex and non-empty values,
\item the set $\fix(K)$ is closed,
\item for each $x\in \fix(K)$, the following implication holds
\[
(f(x,y)\leq0~\wedge~f(x,z)<0)~\to~f(x,ty+(1-t)z)<0,\mbox{ for all }t\in]0,1[;
\]
\end{itemize}
then the set $\qep(f,K)$ is non-empty provided that one of the following assumptions holds
\begin{enumerate}
\item $f$ is properly quasi-monotone 
and has the upper sign property on $C$, and 
the set-valued map $G:\fix(K)\tos C$ defined as
\[
G(x):=\{y\in K(x)\cap \overline{B}_\rho:~f(y,x)>0\}
\]
is lower semi-continuous.
\item $f(x,x)\geq 0$, for all $x\in \fix(K)$, and the set-valued map $R:\fix(K)\tos C$ defined as
\[
R(x):=\{y\in K(x):~f(x,y)<0\}
\]
is lower semi-continuous with convex values.
\end{enumerate}
\end{theorem}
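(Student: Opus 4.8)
The plan is to reduce the unbounded problem to a compact one by truncation at the coercive radius, to solve the truncated problem by a fixed-point argument built on Proposition \ref{JoJa}, and finally to lift the solution back via Proposition \ref{Pro-1}. First I would set $\tilde{C}:=C\cap\overline{B}_\rho$ and $K_\rho(x):=K(x)\cap\overline{B}_\rho$. Since $C$ is convex and closed, $\tilde{C}$ is compact, convex and non-empty (non-emptiness coming from condition~1 of the UCC). Because condition~1 of the UCC gives $K(x)\cap B_\rho\neq\emptyset$ for every $x$, Lemma \ref{new-lemma} applied with the open convex set $V=B_\rho$ shows that $K_\rho$ is lower semi-continuous, and it clearly inherits convex, non-empty values from $K$. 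Thus $K_\rho:\tilde{C}\tos\tilde{C}$ is lower semi-continuous with convex non-empty values, every solution of $\qep(f,K_\rho)$ or $\mqep(f,K_\rho)$ automatically lies in $\tilde{C}$, and $\fix(K_\rho)=\fix(K)\cap\overline{B}_\rho$ is closed.

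For \emph{case 1} I would establish existence on $\tilde{C}$ by contradiction. Suppose $\mqep(f,K_\rho)=\emptyset$; then $G(x)\neq\emptyset$ for every $x\in\fix(K_\rho)$, since $G$ is precisely the Minty ``bad-point'' map for $K_\rho$. By Lemma \ref{L4} the map $\co(G)$ is lower semi-continuous with convex non-empty values, and $\co(G(x))\subseteq K_\rho(x)$. Pasting $\co(G)$ on the closed set $\fix(K_\rho)$ with $K_\rho$ elsewhere, Lemma \ref{L3} gives a lower semi-continuous map $Q:\tilde{C}\tos\tilde{C}$ with convex non-empty values, so Proposition \ref{JoJa} produces $x^*\in\fix(Q)$. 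If $x^*\notin\fix(K_\rho)$ then $x^*\in Q(x^*)=K_\rho(x^*)$, forcing $x^*\in\fix(K_\rho)$, a contradiction; hence $x^*\in\fix(K_\rho)$ and $x^*\in\co(G(x^*))$, i.e. $x^*$ is a convex combination of points $y_i\in K_\rho(x^*)$ with $f(y_i,x^*)>0$, which contradicts proper quasi-monotonicity. Thus $\mqep(f,K_\rho)\neq\emptyset$, and since $K_\rho$ is convex-valued and $f$ has the upper sign property, the Minty argument yields $\mqep(f,K_\rho)\subseteq\qep(f,K_\rho)$, so $\qep(f,K_\rho)\neq\emptyset$.

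For \emph{case 2} the scheme is the same but driven by the value condition. Assuming $\qep(f,K_\rho)=\emptyset$, for each $x\in\fix(K_\rho)$ the set $R_\rho(x):=R(x)\cap\overline{B}_\rho=\{y\in K_\rho(x):f(x,y)<0\}$ is non-empty. After checking that $R_\rho$ is lower semi-continuous on $\fix(K_\rho)$ (through Lemma \ref{new-lemma}, which needs the auxiliary fact that $R(x)\cap B_\rho\neq\emptyset$), I would paste $R_\rho$ on $\fix(K_\rho)$ with $K_\rho$ elsewhere via Lemma \ref{L3} to get a lower semi-continuous map $P:\tilde{C}\tos\tilde{C}$ with convex non-empty values. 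Proposition \ref{JoJa} gives $x^*\in\fix(P)$; as above $x^*\in\fix(K_\rho)$ is forced, whence $x^*\in R_\rho(x^*)$, that is $f(x^*,x^*)<0$, contradicting $f(x,x)\geq0$. Hence $\qep(f,K_\rho)\neq\emptyset$.

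Finally, in both cases $\qep(f,K_\rho)\neq\emptyset$, and I would verify that the hypotheses of Proposition \ref{Pro-1} hold: the convexity of $K$ on $\fix(K)$ and the third displayed implication are assumed, and in case~1 proper quasi-monotonicity together with the upper sign property forces $f(x,x)=0$ on $C$, while in case~2 the condition $f(x,x)\geq0$ is what the lifting step actually uses. Proposition \ref{Pro-1} then promotes any solution of $\qep(f,K_\rho)$ to a solution of $\qep(f,K)$, completing the proof. I expect the main obstacle to be the lower-semicontinuity bookkeeping for the auxiliary maps: keeping the truncated ``bad-point'' map lower semi-continuous after intersecting with $\overline{B}_\rho$ — in particular securing $R(x)\cap B_\rho\neq\emptyset$ so that Lemma \ref{new-lemma} applies — and confirming that the pasted maps $Q$ and $P$ retain convex non-empty values, which is exactly what makes Proposition \ref{JoJa} applicable.
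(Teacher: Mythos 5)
Your proposal is correct and follows essentially the same route as the paper's own proof: truncation of $K$ at the coercive radius via Lemma \ref{new-lemma} and Proposition \ref{JoJa}, pasting $\co(G)$ (resp.\ $R_\rho$) over the closed set $\fix(K_\rho)$ using Lemmas \ref{L3} and \ref{L4}, a fixed point of the pasted map contradicting proper quasi-monotonicity (resp.\ $f(x,x)\geq 0$), the Minty-to-Stampacchia passage via the upper sign property, and the final lifting by Proposition \ref{Pro-1}. Your organization by contradiction (assuming the truncated problems are unsolvable) is only cosmetically different from the paper's dichotomy on whether $J_1$, $J_2$ are non-empty valued, and you even flag explicitly the one delicate point the paper asserts without comment, namely that the lower semi-continuity of $R_\rho$ through Lemma \ref{new-lemma} requires $R(x)\cap B_\rho\neq\emptyset$.
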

\begin{proof}
The set-valued map $K_\rho:C\tos C$ defined by 
\[
K_\rho(x)=K(x)\cap \overline{B}_\rho,
\]
for all $x\in C$, is lower semi-continuous, due to Lemma \ref{new-lemma}, and has convex and non-empty values.
By Proposition \ref{JoJa}, there exists a fixed point of $K_\rho$. Moreover, it is clear that 
$\fix(K_\rho)=\fix(K)\cap\overline{B}_\rho$. 

\begin{enumerate}
\item In the first case, we define the set-valued map $J_1:C\tos C$ by
\[
J_1(x):=\left\lbrace \begin{array}{cc}
K_\rho(x),&x\in C\setminus \fix(K_\rho)\\
\co(G(x)),&x\in \fix(K_\rho)
\end{array}\right.
\]
which is lower semi-continuous due to Lemmas \ref{L3} and \ref{L4}. Since $K_\rho(C)$ is relatively compact, if $J_1$ is non-empty valued, then by Proposition \ref{JoJa} there exists $x_0\in \fix(J_1)$, that means $x_0\in \fix(K_\rho)$ and there exists $x_1,\dots,x_m\in G(x_0)$ such that $x_0\in\co(\{x_1,\dots,x_m\})$. However, this is a contradiction with the fact that $f$ is properly quasi-monotone. Hence, there exists $x_0\in C$ such that
$J_1(x_0)=\emptyset$, which in turn implies that $x_0\in \fix(K_\rho)$. Clearly, 
$x_0\in \mqep(f,K_\rho)$. 
Thus, since $f$ has the upper sign property we have that $x_0\in\qep(f,K_\rho)$, due to Proposition 3.1 in \cite{ACI}.
\item Now, in the second case, we consider $R_\rho:\fix(K_\rho)\tos C$ defined as
\[
R_\rho(x):=R(x)\cap \overline{B}_\rho,
\]
which is lower semi-continuous with convex values. Thus, the set-valued map $J_2:C\tos C$ defined as
\[
J_2(x):=\left\lbrace\begin{array}{cc}
K_\rho(x),&x\in C\setminus\fix(K_\rho)\\
R_\rho(x),&x\in \fix(K_\rho)
\end{array}\right.
\]
is lower semi-continuous with convex values. If $J_2$ is non-empty valued, then again by Proposition \ref{JoJa} there exists $x_0\in J_2(x_0)$, that means $x_0\in\fix(K_\rho)$ and $x_0\in R_\rho(x_0)$, which in turn implies $f(x_0,x_0)<0$. So, we get a contradiction. Hence, there exists $x_0\in C$ such that $J_2(x_0)=\emptyset$. Thus, $x_0\in\fix(K_\rho)$ and $R_\rho(x_0)=\emptyset$, i.e. 
\[
f(x_0,y)\geq0,
\]
for all $y\in K_\rho(x_0)$.
\end{enumerate}
Finally, in both cases, the result follows from Proposition \ref{Pro-1}.
\end{proof}

\begin{remark}
A few remarks are needed about the previous result.
\begin{enumerate}
\item It is clear that every function $f:\R^n\times\R^n\to\R$ which is properly quasi-monotone and has the upper sign property vanishes on the diagonal of $\R^n\times\R^n$.
\item The lower semi-continuity of $R$ could be deduced from the upper semi-continuity  of $f$ in its second argument and the lower semi-continuity of $K$. Moreover, $R$ is convex valued provided that $f$ is quasi-convex in its second variable.
\end{enumerate}
\end{remark}

It its clear that the UCC holds provided the compactness of $C$. Thus, as a direct consequence of Theorem \ref{main-result}, we recover as corollaries the following three results from the literature. 

\begin{corollary}[Proposition 2.1 in \cite{BP05}]
Let $C$ be a non-empty, compact and convex subset of $\R^n$ and $f:\R^n\times\R^n\to\R$ be a function satisfying the following assumptions
\begin{enumerate}
\item $f$ is properly quasi-monotone,
\item for each $y\in C$, the function $f(\cdot,y)$ is upper sign continuous, which means that it satisfies the following implication
\[
\inf_{t\in]0,1[}f(tx+(1-t)y,y)\geq0 ~\to~f(x,y)\geq0, 
\]
\item for each $x\in C$, the set $\{y\in C:~f(x,y)\leq0\}$ is closed,
\item $f$ is quasi-convex with respect to its second argument,
\item $f$ vanishes on the diagonal of $C\times C$,
\item  the following implication holds
\[
(f(x,y)=0~\wedge~f(x,z)<0)~\to~f(x,ty+(1-t)z)<0,\mbox{ for all }t\in]0,1[.
\]
\end{enumerate}
Then the equilibrium problem associated to $f$ and $C$ admits at least a solution.
\end{corollary}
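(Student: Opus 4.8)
The plan is to obtain the result as the special case of Theorem \ref{main-result} corresponding to the constant constraint map $K\equiv C$. With this choice $K$ is trivially lower semi-continuous with convex and non-empty values, one has $\fix(K)=C$, which is closed because $C$ is compact, and $\qep(f,K)$ coincides with the solution set of the equilibrium problem associated to $f$ and $C$. I would then check that all the hypotheses of Theorem \ref{main-result} under its \emph{first} alternative are satisfied.

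I would begin with the routine verifications. Since $C$ is compact, $M:=\max_{x\in C}\|x\|$ is finite; fixing any radius $\rho>M$ gives $C\subset B_\rho$, so $K(w)\cap B_\rho=C\neq\emptyset$ for every $w\in C$ (first part of the UCC), while choosing $\rho_z\in\,]M,\rho[$ makes $K(z)\cap\overline{B}_\rho\setminus\overline{B}_{\rho_z}=C\setminus\overline{B}_{\rho_z}=\emptyset$, so the second part of the UCC holds vacuously. The implication in the third bullet of Theorem \ref{main-result} follows by splitting cases: when $f(x,y)<0$ it is a consequence of the quasi-convexity of $f(x,\cdot)$ (assumption 4), and when $f(x,y)=0$ it is exactly assumption 6. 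Proper quasi-monotonicity is assumption 1, and the lower semi-continuity of $G(x)=\{y\in C:~f(y,x)>0\}$ follows from assumption 3: given $y_0\in G(x_0)$, the point $x_0$ lies in $\{x:~f(y_0,x)>0\}$, which is open because its complement $\{x:~f(y_0,x)\leq0\}$ is closed by assumption 3 (note that $x$ here plays the role of the second argument of $f$, with the first argument $y_0$ fixed); keeping this same $y_0$ then witnesses lower semi-continuity of $G$ at $x_0$.

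The main obstacle is to verify that $f$ has the upper sign property on $C$, since Theorem \ref{main-result} requires this whereas the corollary only assumes the weaker-looking upper sign continuity of each $f(\cdot,y)$ (assumption 2). I would bridge this gap as follows. Suppose $f(p_t,x)\leq0$ for all $t\in\,]0,1[$, where $p_t:=tx+(1-t)y$; the goal is $f(x,y)\geq0$. Fix $t$ and combine $f(p_t,p_t)=0$ (assumption 5) with the quasi-convexity of $f(p_t,\cdot)$ to get $0=f(p_t,p_t)\leq\max\{f(p_t,x),f(p_t,y)\}$; since $f(p_t,x)\leq0$, this already forces $f(p_t,y)\geq0$ whenever $f(p_t,x)<0$, and in the remaining case $f(p_t,x)=0$ assumption 6 applied to the triple $(p_t,x,y)$ would give $f(p_t,sx+(1-s)y)<0$ for all $s\in\,]0,1[$, which at $s=t$ reads $f(p_t,p_t)<0$, contradicting diagonal vanishing. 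Hence $f(p_t,y)\geq0$ for every $t\in\,]0,1[$, and the upper sign continuity of $f(\cdot,y)$ then yields $f(x,y)\geq0$, establishing the upper sign property.

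With the upper sign property in hand, the first alternative of Theorem \ref{main-result} applies to the constant map $K\equiv C$, so $\qep(f,K)$ is non-empty, which is precisely the statement that the equilibrium problem associated to $f$ and $C$ admits a solution. The delicate point throughout is the sign-flip from $f(p_t,x)\leq0$ to $f(p_t,y)\geq0$, where assumptions 4, 5 and 6 must be combined carefully; everything else reduces to the observation that compactness trivializes the coerciveness condition and that the constant map makes the structural hypotheses of Theorem \ref{main-result} immediate.
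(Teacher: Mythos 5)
Your proposal is correct and follows essentially the same route as the paper: specialize the first case of Theorem \ref{main-result} to the constant map $K\equiv C$, noting that compactness of $C$ trivializes the UCC, that $\fix(K)=C$, and that assumption 3 makes the fibres of $G(x)=\{y\in C:\ f(y,x)>0\}$ open, hence $G$ lower semi-continuous. The only divergence is that where the paper simply cites Lemma 3 in \cite{castellani2012} and Lemma 2.1 in \cite{ACI} to obtain the upper sign property from assumptions 2, 4, 5 and 6, you derive it directly (your sign-flip argument combining $f(p_t,p_t)=0$, quasi-convexity, and assumption 6 is exactly the content of those lemmas), and you explicitly verify the theorem's third bullet by splitting the cases $f(x,y)<0$ (quasi-convexity) and $f(x,y)=0$ (assumption 6), a detail the paper's proof leaves implicit.
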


\begin{proof}
It is enough to show that all assumptions of first case of Theorem \ref{main-result} are satisfied. In that sense,
we consider $K:C\tos C$ a constant set-valued map, defined as $K(x)=C$ for all $x\in C$.
Clearly  $K$ is lower semi-continuous with convex and non-empty values, and $\fix(K)=C$.
By  assumptions 2, 4, 5 and 6,
the upper sign property follows from  Lemma 3 in \cite{castellani2012} and Lemma 2.1 in \cite{ACI}.
Since $C$ is compact, the set valued $G:C\tos C$ in Theorem \ref{main-result} is given by
\[
G(x)=\{y\in C:~f(y,x)>0\}.
\]
Finally, $G$ has open fibres due to assumption 3, which in turn implies that it is lower semi-continuous.
\end{proof}

\begin{corollary}[Theorem 4.5 in \cite{ACI}]
Let $f:\R^n\times\R^n\to\R$ be a function, $C$ be a convex, compact and non-empty subset of $\R^n$,
and $K:C\tos C$ be a set-valued map. Suppose that the following properties hold
\begin{enumerate}
\item  $K$ is closed and lower semi-continuous with convex values, and $\Int(K(x))\neq\emptyset$, for all $x\in C$;
\item $f$ is properly quasi-monotone;
\item $f$ is semi-strictly quasi-convex and lower semi-continuous with respect to its second argument;
\item for all $x,y\in\R^n$ and all sequence $(y_k)_k\subset\R^n$ converging to $y$, the following implication holds
\[
\liminf_{k\to+\infty}f(y_k,x)\leq0~\Rightarrow~ f(y,x)\leq0,
\]
\item $f$ has the upper sign property.
\end{enumerate}
Then, the quasi-equilibrium problem admits a solution.
\end{corollary}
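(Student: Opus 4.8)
The plan is to verify that, for a suitable coercive radius, all hypotheses of the first case of Theorem \ref{main-result} are satisfied, with $K$ itself as the constraint map. First I would use compactness of $C$ to choose $\rho>0$ with $C\subset B_\rho$; as already noted in the text this makes the UCC hold, its first condition because $K$ is non-empty-valued (from $\Int(K(x))\neq\emptyset$) and $K(w)\subset C\subset B_\rho$, and its second condition vacuously, since $K(z)\cap\overline{B}_\rho\setminus\overline{B}_{\rho_z}=\emptyset$ as soon as $\overline{B}_{\rho_z}\supset C$ with $\rho_z<\rho$. With this choice one has $K(x)\cap\overline{B}_\rho=K(x)$, so the auxiliary map of Theorem \ref{main-result} reduces to $G(x)=\{y\in K(x):f(y,x)>0\}$.

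Next I would dispatch the structural hypotheses, all of which are routine. The set $C$ is convex and closed; $K$ is lower semi-continuous with convex, non-empty values by assumption 1; and $\fix(K)$ is closed because $K$ is closed, so $\fix(K)$ is the preimage of the closed set $\gra(K)$ under the continuous map $x\mapsto(x,x)$. The convexity-type implication required on $\fix(K)$ is exactly the one furnished by semi-strict quasi-convexity of $f$ in its second argument, by the Remark following Proposition \ref{Pro-1}. Finally, proper quasi-monotonicity (assumption 2) and the upper sign property (assumption 5) are assumed outright, so the first case applies once the lower semi-continuity of $G$ is established.

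That last point is the only substantial step, and it is where I expect the main obstacle. The naive attempt, fixing $y_0\in G(x_0)$ and approximating it by $\tilde y_k\in K(x_k)$ via lower semi-continuity of $K$, fails because $f(\tilde y_k,x_k)>0$ cannot be controlled when both arguments of $f$ move at once. To circumvent this diagonal difficulty I would not retain $y_0$ but slide it toward the interior: assumption 4 makes $\{y:f(y,x_0)>0\}$ open (its complement is sequentially closed), so, choosing any $\bar y\in\Int(K(x_0))$, the points $y_s=(1-s)y_0+s\bar y$ lie in $\Int(K(x_0))$ for $s\in\,]0,1]$ and satisfy both $f(y_s,x_0)>0$ and $y_s\in V$ for $s$ small, where $V$ is a fixed open set meeting $G(x_0)$. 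Fixing such an interior point $\hat y=y_s$, two robust facts then rescue the argument: interior points of a convex-valued lower semi-continuous map persist, so $\hat y\in K(x)$ for $x$ near $x_0$ (by the standard simplex argument, using that a simplex with vertices in $K(x_0)$ surrounding $\hat y$ has nearby simplices with vertices in $K(x)$ still surrounding $\hat y$), while lower semi-continuity of $f$ in its second argument gives $f(\hat y,x)>0$ for $x$ near $x_0$. Hence $\hat y\in G(x)\cap V$ for $x$ near $x_0$, which yields the lower semi-continuity of $G$ (arguing sequentially, as is legitimate in $\R^n$). With $G$ lower semi-continuous, the first case of Theorem \ref{main-result} provides an element of $\qep(f,K)$, completing the proof.
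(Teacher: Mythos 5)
Your proposal is correct and follows essentially the same route as the paper: choose $\rho$ with $C\subset B_\rho$ so that the UCC holds trivially, reduce $G$ to $G(x)=\{y\in K(x):f(y,x)>0\}$, check the structural hypotheses (closedness of $\fix(K)$ from closedness of $K$, the convexity implication from semi-strict quasi-convexity), and apply the first case of Theorem \ref{main-result}. The only difference is that where the paper handles the lower semi-continuity of $G$ by citing the steps of Corollary 7 in \cite{JC-JZ-2018}, you prove it directly via the standard argument (openness of $\{y:f(y,x_0)>0\}$ from assumption 4, sliding $y_0$ toward a point of $\Int(K(x_0))$, persistence of interior points under lower semi-continuity of a convex-valued map, and lower semi-continuity of $f(\hat y,\cdot)$), which is precisely the intended argument.
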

\begin{proof}
Since $C$ is compact, the set-valued map $G$ in Theorem \ref{main-result} is given by
\[
G(x)=\{y\in K(x):~f(y,x)>0\}
\]
for every $x\in \fix(K)$. The lower semi-continuity of $G$ follows from the same steps of the proof of Corollary 7 in \cite{JC-JZ-2018}. The result follows from Theorem \ref{main-result}.
\end{proof}

\begin{corollary}[Theorem 3 in \cite{Oettli}]
Let $C$ be a compact, convex and non-empty subset of $\R^n$, $K:C\tos\R^n$ and $K_C:C\tos C$ be two set-valued maps such that $K_C(x)=K(x)\cap C$, and $f:C\times C\to\R$ be a function. If the following assumptions hold
\begin{enumerate}
\item $K_C$ is upper and lower semi-continuous with convex, compact and non-empty values,
\item $f$ is continuous and $f(x,\cdot)$ is convex, for all $x\in C$,
\item $f(x,x)\geq0$, for all $x\in C$,
\item for each $x\in\fix(K_C)$ there exists $y\in K_C(x)$ such that $f(x,y)\leq0$ and
$]y,z]\cap K_C(x)\neq\emptyset$, for all $z\in K(x)\setminus K_C(x)$;
\end{enumerate}
then the quasi-equilibrium problem associated to $K$ and $f$ admits at least a solution.
\end{corollary}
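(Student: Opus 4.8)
The plan is to reduce the statement to the second case of Theorem \ref{main-result}, applied to the \emph{truncated} constraint map $K_C:C\tos C$, and then to upgrade the solution so obtained for $K_C$ into a solution of the full problem associated to $K$ by means of assumption 4. Since $C$ is compact, the UCC comes for free: fixing $\rho_0>0$ with $C\subset B_{\rho_0}$ and setting the coercive radius $\rho:=2\rho_0$, one has $K_C(w)\subset C\subset B_\rho$ for every $w\in C$ (first condition of the UCC), while the choice $\rho_z:=\rho_0\in\,]0,\rho[$ makes the set $K_C(z)\cap\overline{B}_\rho\setminus\overline{B}_{\rho_z}$ empty, so the second condition holds vacuously. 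It then remains to verify the structural hypotheses of the theorem for the pair $(f,K_C)$ and, separately, to carry out the extension to $K$.

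First I would check the hypotheses of Theorem \ref{main-result}, case 2, for $(f,K_C)$. The map $K_C$ is lower semi-continuous with convex and non-empty values by assumption 1, and $C$ is convex and closed, being compact and convex. The set $\fix(K_C)$ is closed because $K_C$ is upper semi-continuous with compact values and hence has closed graph. The convex-combination implication required by the theorem is immediate from the convexity of $f(x,\cdot)$ in assumption 2: if $f(x,y)\le0$ and $f(x,z)<0$, then $f(x,ty+(1-t)z)\le tf(x,y)+(1-t)f(x,z)<0$ for every $t\in\,]0,1[$. The condition $f(x,x)\ge0$ on $\fix(K_C)$ is assumption 3. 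Finally, the map $R(x)=\{y\in K_C(x):f(x,y)<0\}$ has convex values, since $K_C(x)$ is convex and the strict sub-level set $\{y:f(x,y)<0\}$ is convex; and it is lower semi-continuous because $f$ is continuous and $K_C$ is lower semi-continuous (given $y_0\in R(x_0)$ and $x_k\to x_0$, lower semi-continuity of $K_C$ furnishes $y_k\in K_C(x_k)$ with $y_k\to y_0$, and continuity of $f$ gives $f(x_k,y_k)<0$ eventually, so that $y_k\in R(x_k)$). By Theorem \ref{main-result}, case 2, the set $\qep(f,K_C)$ is therefore non-empty.

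The hard part, and the only place where assumption 4 enters, is the passage from a solution of $\qep(f,K_C)$ to a solution of $\qep(f,K)$; this step parallels Proposition \ref{Pro-1}, with $C$ playing the role of $\overline{B}_\rho$ and assumption 4 playing the role of the coerciveness condition. Let $x_0\in\qep(f,K_C)$, so that $x_0\in K(x_0)$ and $f(x_0,y)\ge0$ for all $y\in K_C(x_0)$. Arguing by contradiction, suppose $x_0$ is not a solution of the problem associated to $K$, i.e.\ $f(x_0,z)<0$ for some $z\in K(x_0)$; since $f(x_0,\cdot)\ge0$ on $K_C(x_0)$, necessarily $z\in K(x_0)\setminus K_C(x_0)$, so $z\notin C$. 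By assumption 4 there is $y\in K_C(x_0)$ with $f(x_0,y)\le0$ together with a point $w\in\,]y,z]\cap K_C(x_0)$; writing $w=(1-s)y+sz$, the fact that $w\in C$ while $z\notin C$ forces $s\in\,]0,1[$. Convexity of $f(x_0,\cdot)$ along the segment $[y,z]$ then yields $f(x_0,w)\le(1-s)f(x_0,y)+sf(x_0,z)<0$, which contradicts $f(x_0,w)\ge0$, valid because $w\in K_C(x_0)$. Hence $x_0$ solves the full quasi-equilibrium problem.

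In summary, the routine content is the verification list for case 2 of Theorem \ref{main-result}, where the two points needing a little care are the closedness of $\fix(K_C)$ (via the closed-graph property of the upper semi-continuous map $K_C$) and the lower semi-continuity of $R$ (via joint continuity of $f$). The genuinely substantive step, and the one I expect to be the main obstacle, is the convexity-plus-segment argument of the last paragraph that converts a solution on the compact truncation $K_C$ into a solution for the original, possibly unbounded-valued, map $K$.
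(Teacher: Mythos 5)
Your proposal is correct and follows the same route as the paper: the paper's proof likewise notes that assumption 4 yields $\qep(f,K_C)\subset\qep(f,K)$ and then invokes the second case of Theorem \ref{main-result} (with the UCC holding trivially by compactness of $C$) to get $\qep(f,K_C)\neq\emptyset$. You simply spell out the details the paper leaves implicit, namely the verification of the hypotheses of Theorem \ref{main-result} for $(f,K_C)$ and the convexity-along-the-segment argument behind the inclusion, and both are carried out correctly.
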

\begin{proof}
First, note that the fourth assumption implies $\qep(f,K_C)\subset \qep(f,K)$.
The compactness of $C$ implies that the set $\qep(f,K_C)$ is non-empty, due to the second case of Theorem \ref{main-result}, and the result follows.
\end{proof}

Now we state a result without lower semi-continuity assumption.
We can consider it as the non-compact version of Theorem 5 in \cite{Lassonde-b}, in the finite dimensional setting. 

\begin{theorem}\label{Gen-Lassonde}
Let $C$ be a convex, closed and non-empty subset of $\R^n$, $K:C\tos C$ be a set-valued map and 
$f:\R^n\times \R^n\to\R$ be a function. If the quasi-equilibrium problem associated to $f$ and $K$ satisfies the UCC
with coercive radius $\rho$ and the following properties hold:
\begin{enumerate}
\item $K$ is closed with convex and non-empty values,
\item $f(\cdot,y)$ is upper semi-continuous, for all $y\in C$,
\item $f(x,\cdot)$ is quasi-convex, for all $x\in C$,
\item the set $D=\{x\in C\cap\overline{B}_\rho:~\inf_{y\in K(x)\cap \overline{B}_\rho}f(x,y)<0\}$ is open in $C\cap\overline{B}_\rho$,
\item $f$ vanishes on the diagonal of $C \times C$,
\item for each $x\in \fix(K)$, the following implication holds
\[
(f(x,y)\leq0~\wedge~f(x,z)<0)~\to~f(x,ty+(1-t)z)<0,\mbox{ for all }t\in]0,1[;
\]
\end{enumerate}
then the quasi-equilibrium problem admits at least a solution.
\end{theorem}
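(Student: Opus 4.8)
The plan is to follow the scheme of the proof of Theorem \ref{main-result}, first solving the equilibrium problem on the bounded set $\widetilde{C}:=C\cap\overline{B}_\rho$ and then extending the solution to the whole constraint map via Proposition \ref{Pro-1}. The essential difference is that, since lower semi-continuity of $K$ is no longer assumed, I would replace the fixed-point argument based on Proposition \ref{JoJa} by Lassonde's criteria, Theorem \ref{Lassonde-criteria}, applied on $\widetilde{C}$, which is compact (as $C$ is closed and $\overline{B}_\rho$ is compact), convex (an intersection of convex sets), and non-empty (by the first part of the UCC, any $w\in C$ yields a point of $K(w)\cap B_\rho\subset\widetilde{C}$).

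First I would set $S:=K_\rho$, where $K_\rho(x)=K(x)\cap\overline{B}_\rho$, viewed as a map from $\widetilde{C}$ into itself. Its graph equals $\gra(K)\cap(\widetilde{C}\times\overline{B}_\rho)$, hence is closed, so $S$ is closed; being a closed map into the compact set $\overline{B}_\rho$, it is upper semi-continuous. Its values are convex (using that $K$ is convex-valued), closed and bounded, hence compact, and non-empty by the first condition of the UCC. Thus $S$ fulfils assumption 1 of Theorem \ref{Lassonde-criteria}.

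Next I would define $T:\widetilde{C}\tos\widetilde{C}$ by $T(x):=\{y\in\widetilde{C}:~f(x,y)<0\}$. Quasi-convexity of $f(x,\cdot)$ makes this strict sub-level set convex, so $T$ is convex-valued. For $y\in\widetilde{C}$ the fibre $T^{-1}(y)=\{x\in\widetilde{C}:~f(x,y)<0\}$ is open because $f(\cdot,y)$ is upper semi-continuous, so $T$ has open fibres. Since $f$ vanishes on the diagonal, $f(x,x)=0$, whence $x\notin T(x)$ and $\fix(T)=\emptyset$. Finally, as $K(x)\subset C$ we have $S(x)\cap T(x)=\{y\in K(x)\cap\overline{B}_\rho:~f(x,y)<0\}$, which is non-empty exactly when $\inf_{y\in K(x)\cap\overline{B}_\rho}f(x,y)<0$; hence $\{x\in\widetilde{C}:~S(x)\cap T(x)\neq\emptyset\}=D$, open by assumption 4. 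Therefore Theorem \ref{Lassonde-criteria} applies and yields $x_0\in\fix(S)$ with $S(x_0)\cap T(x_0)=\emptyset$; equivalently $x_0\in K(x_0)\cap\overline{B}_\rho$ and $f(x_0,y)\geq0$ for all $y\in K(x_0)\cap\overline{B}_\rho$, i.e. $x_0\in\qep(f,K_\rho)$.

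To conclude I would invoke Proposition \ref{Pro-1}, whose hypotheses hold: the UCC is assumed, its assumption 1 is exactly condition 6, its assumption 2 follows from condition 5, and its assumption 3 follows from condition 1. Hence $x_0\in\qep(f,K)$, proving the claim. The main obstacle I anticipate is not a single computation but the correct matching of the data to the two maps of Lassonde's criteria: recognising that the abstractly stated openness in condition 4 is precisely what replaces the lower semi-continuity of $K$ in securing the third hypothesis of Theorem \ref{Lassonde-criteria}, and checking that closedness of $K$ together with compactness of $\overline{B}_\rho$ upgrades $K_\rho$ to an upper semi-continuous map with compact values.
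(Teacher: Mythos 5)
Your proposal is correct and follows essentially the same route as the paper: apply Theorem \ref{Lassonde-criteria} on $C\cap\overline{B}_\rho$ with $S=K_\rho$ (upper semi-continuous with convex, compact, non-empty values via the closed graph of $K$ and the UCC) and the strict sub-level map $T$, then pass from $\qep(f,K_\rho)$ to $\qep(f,K)$ by Proposition \ref{Pro-1}. The one point where you deviate is actually an improvement in precision: the paper defines $T(x)=\{y\in K_\rho(x):f(x,y)<0\}$ and asserts open fibres, which is not literally justified since $T^{-1}(y)$ then involves the closed condition $y\in K(x)$, whereas your $T(x)=\{y\in\widetilde{C}:f(x,y)<0\}$ genuinely has open fibres by upper semi-continuity of $f(\cdot,y)$, with the intersection $S(x)\cap T(x)$ and assumption 4 supplying exactly the third hypothesis of Lassonde's criteria, as clearly intended in the paper.
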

\begin{proof}
We denote $C_\rho$ the set $C\cap \overline{B}_\rho$.
The set-valued maps $K_\rho,T:C_\rho\tos C_\rho$ defined as
\[
K_\rho(x):=K(x)\cap \overline{B}_\rho~\mbox{ and }~T(x):=\{y\in K_\rho(x):~f(x,y)<0\}.
\]
Clearly
$\gra(K_\rho)=\gra(K)\cap (C_\rho\times C_\rho)$ and
$D=\left\lbrace x\in C_\rho:~T(x)\cap K_\rho(x)\neq\emptyset\right\rbrace$. 
By the uniform coerciveness condition and the first assumption imply that $K_\rho$ is upper semi-continuous with convex, compact and non-empty values. Also, assumptions 2 and 3 imply that $T$ is convex-valued with open fibres.
Since $f$ vanishes on the diagonal on $C\times C$, we deduce that $\fix(T)=\emptyset$. Hence, by Theorem \ref{Lassonde-criteria} there exists $x\in\fix(K_\rho)$ such that $K_\rho(x)\cap T(x)=\emptyset$, that means $x\in\qep(f,K_\rho)$. 
The result follows from Proposition \ref{Pro-1}.
\end{proof}

The previous result is strongly related to Theorem 3 in \cite{Tian-Zhou}. The two set of conditions differ in two aspects, first the authors in \cite{Tian-Zhou} considered that the function $f$ satisfies that for any $x_1,\dots,x_m\in C$ and any $x=\sum_{i=1}^m \lambda_i x_i$ with  $\sum_{i=1}^m\lambda_i=1$ and $\{\lambda_1,\dots,\lambda_m\}\subset[0,1]$, it holds
\[
\sum_{i=1}^m \lambda_i f(x,x_i)\geq0.
\]
Here, this condition is replaced by the quasi-convexity in the second argument of $f$ and the fact that $f$ vanishes on the diagonal of $C\times C$. Both assumptions are independent in general, in \cite{Zhou-Chen} the authors show some examples about it. The second difference is the coerciveness conditions, in \cite{Tian-Zhou} they considered that there exist a non-empty, compact and convex set $Z\subset C$ and a non-empty set $W\subset Z$ such that $K(W)\subset Z$ and
\begin{enumerate}
\item $K(x)\cap Z\neq\emptyset$, for all $x\in Z$;
\item for each $x\in Z\setminus W$ there exists $y\in K(x)\cap Z$ such that $f(x,y)<0$;
\item $\{x\in Z:~\inf_{y\in K(x)\cap Z}f(x,y)\geq0\}$ is closed.
\end{enumerate}
However, this coerciveness condition is too strong. For instance, consider $C:=[0,+\infty[$ and $K:C\tos C$ defined by $K(x):=[1,+\infty[$ which is closed, Moreover, there does not exist non-empty sets $Z, W\subset C$ with $Z$ convex and compact, such that $K(W)\subset Z$.

%==========================
\section{Applications}\label{applications}
In this section, we consider applications to the study of existence of solutions of two well-known problems: (i) the quasi-variational inequality problem, and (ii) the generalized Nash equilibrium problem.
\subsection{Quasi-variational inequality}
Given  a subset $C$ of $\R^n$ and two set-valued maps $T:\R^n\tos \R^n$ and $K:C\tos C$, the set 
$\qvi(T,K)$
denotes the solution set of the \emph{quasi-variational inequality problem}
\[
\{x\in C:~x\in K(x) \mbox{ and exists }x^*\in T(x)\mbox{ such that }\langle x^*,y-x\rangle\geq0,~\forall y\in K(x)\}. 
\]
The quasi-variational inequality problem associated to $T$ and $K$ is said to satisfy the \emph{uniform coerciveness condition} if
the following two conditions hold:
\begin{enumerate}
 \item there exists $\rho>0$ such that $K(w)\cap B_\rho\neq\emptyset$, for all $w\in C$,
 \item for each $z\in \fix(K)$, there exists $\rho_z\in]0,\rho[$ such that
\begin{align*}
&\forall x\in K(z)\cap\overline{B}_\rho\setminus \overline{B}_{\rho_z},~\exists y\in K(z) \mbox{ with }\|y\|<\|x\|
\mbox{ such that }\\
&\forall x^*\in T(x),~\langle x^*,y-x\rangle\leq0.
\end{align*}
\end{enumerate}
We will call \emph{coercive radius} of the quasi-variational inequality problem the real number $\rho$ in the previous definition.

Now, we consider  the function $f_T:\R^n\times\R^n\to\R$ defined as
\begin{align}\label{qvi-qep}
f_T(x,y):=\sup_{x^*\in T(x)}\langle x^*,y-x\rangle.
\end{align}
It is clear that $\qep(f_T,K)=\qvi(T,K)$, provided $T$ has compact and non-empty values.

\begin{lemma}
The quasi-variational inequality problem associated to $T$ and $K$ satisfies the uniform coerciveness condition
if and only if, the quasi-equilibrium problem associated to $f_T$ (defined in \eqref{qvi-qep}) and $K$ satisfies the UCC.
%uniform coerciveness condition.
\end{lemma}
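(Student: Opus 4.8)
The plan is to unravel both definitions and verify that they coincide condition by condition, so that in fact the same coercive radius $\rho$ works for both problems. First I would note that condition~1 of the uniform coerciveness condition is identical in the two definitions: it involves only the map $K$ and the radius $\rho$, and makes no reference to $T$ or to $f_T$. Hence condition~1 for the quasi-variational inequality problem holds if and only if condition~1 for the quasi-equilibrium problem holds, with the same $\rho$.

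For condition~2, I would fix $z \in \fix(K)$ --- noting that $\fix(K)$ depends only on $K$ and is therefore the same set in both settings --- and compare the two statements. Both have exactly the same quantifier structure: the same $\rho_z \in\,]0,\rho[$, the same range $x \in K(z)\cap\overline{B}_\rho\setminus\overline{B}_{\rho_z}$, and the same requirement that $y \in K(z)$ with $\|y\| < \|x\|$. The only difference lies in the final inequality: the quasi-equilibrium version demands $f_T(x,y)\le 0$, whereas the quasi-variational version demands $\langle x^*, y-x\rangle \le 0$ for all $x^*\in T(x)$.

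The key step is therefore the elementary equivalence coming directly from the definition of $f_T$ in \eqref{qvi-qep} as a supremum: for fixed $x,y$ one has
\[
f_T(x,y)=\sup_{x^*\in T(x)}\langle x^*,y-x\rangle \le 0 \iff \langle x^*,y-x\rangle\le 0 \ \text{ for all } x^*\in T(x).
\]
Substituting this equivalence into condition~2 shows that the two final inequalities agree for every admissible pair $x,y$, so that condition~2 of one definition holds precisely when condition~2 of the other does, with the same $\rho_z$. Combining the two comparisons yields the claimed equivalence. I expect no genuine obstacle here beyond correctly pairing the quantifiers; the only points deserving a moment's care are the supremum equivalence above (which holds even when $T(x)$ is empty, both sides then being vacuously satisfied) and the observation that $\fix(K)$ and the coercive radius $\rho$ are shared data of the two problems, so that a single choice of $\rho$ and of each $\rho_z$ serves both simultaneously.
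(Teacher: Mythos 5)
Your proof is correct and is exactly the intended argument: the paper states this lemma without proof precisely because it reduces to the elementary equivalence $\sup_{x^*\in T(x)}\langle x^*,y-x\rangle\le 0 \iff \langle x^*,y-x\rangle\le 0$ for all $x^*\in T(x)$, which you identify and apply with the correct quantifier pairing. Your added remarks (the empty-$T(x)$ case and the sharing of $\rho$, $\rho_z$, and $\fix(K)$) are accurate and need no compactness of $T(x)$, unlike the identification $\qep(f_T,K)=\qvi(T,K)$ itself.
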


As a direct consequence of the previous lemma and  Theorem \ref{main-result} we obtain the following result.

\begin{theorem}
Let $C$ be a closed, convex and non-empty subset of $\R^n$, and $T:\R^n\tos\R^n$, $K:C\tos C$ be two set-valued maps. If the quasi-variational inequality satisfies the uniform coerciveness condition with coercive radius $\rho$, then it admits at least a solution provided that
\begin{enumerate}
\item $T$ is compact and non-empty valued,
\item $T$ is properly quasi-monotone on $C$ i.e., for all $x_1,\dots,x_m\in C$ and any  $x\in\co(\{x_1,\dots,x_m\})$, there exists $i$ such that
\[
\langle x_i^*,x-x_i\rangle\leq0,\mbox{ for all }x_i^*\in T(x_i),
\]
\item $T$ is upper sign-continuous on $C$, that means for
all $x,y\in C$, the following implication holds
\[
\left(\forall t\in]0,1[,~\inf_{x_t^*\in T(x_t)}\langle x_t^*,y-x\rangle\geq0\right)~\Rightarrow~
\sup_{x^*\in T(x)}\langle x^*,y-x\rangle\geq0,
\]where $x_t=tx+(1-t)y$.,
\item $K$ is lower semi-continuous with convex and non-empty values
\item The set $\fix(K)$ is closed and the set-valued map $G:\fix(K)\tos C$ defined as
\[
G(x):=\left\lbrace y\in K(x)\cap\overline{B}_\rho:~\sup_{x^*\in T(x)}\langle x^*,y-x\rangle>0\right\rbrace
\]
is lower semi-continuous.
\end{enumerate}
\end{theorem}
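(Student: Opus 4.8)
The plan is to reduce the quasi-variational inequality to a quasi-equilibrium problem by means of the function $f_T$ introduced in \eqref{qvi-qep} and then to invoke the first case of Theorem \ref{main-result}. First I would record the two structural facts we are handed for free: since $T$ is compact and non-empty valued (assumption 1), the supremum defining $f_T(x,y)=\sup_{x^*\in T(x)}\langle x^*,y-x\rangle$ is attained and finite, and $\qep(f_T,K)=\qvi(T,K)$, as observed right after \eqref{qvi-qep}; moreover, by the preceding lemma the uniform coerciveness condition for the quasi-variational inequality is equivalent to the UCC for the pair $(f_T,K)$, so the latter holds with the same coercive radius $\rho$. It therefore suffices to check that $f_T$ and $K$ fulfil all the hypotheses of the first case of Theorem \ref{main-result}.

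The three standing assumptions of Theorem \ref{main-result} are almost immediate. Assumption 4 gives that $K$ is lower semi-continuous with convex non-empty values, and assumption 5 gives that $\fix(K)$ is closed. For the convexity-type implication I would use that, for fixed $x$, the partial map $f_T(x,\cdot)=\sup_{x^*\in T(x)}\langle x^*,\cdot-x\rangle$ is a supremum of affine functions, hence convex; thus whenever $f_T(x,y)\leq0$ and $f_T(x,z)<0$ we get, for every $t\in\,]0,1[$, the estimate $f_T(x,ty+(1-t)z)\leq t\,f_T(x,y)+(1-t)f_T(x,z)<0$, because the first summand is non-positive and the second strictly negative.

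It remains to verify the conditions specific to the first case. Proper quasi-monotonicity of $f_T$ is a direct rewriting of assumption 2: given $x_1,\dots,x_m\in C$ and $x\in\co(\{x_1,\dots,x_m\})$, assumption 2 produces an index $i$ with $\langle x_i^*,x-x_i\rangle\leq0$ for all $x_i^*\in T(x_i)$, which is exactly $f_T(x_i,x)\leq0$, so $\min_i f_T(x_i,x)\leq0$. The lower semi-continuity of the auxiliary map $G$ required by Theorem \ref{main-result} is precisely assumption 5. The one step that needs genuine care, and which I expect to be the main obstacle, is deriving the upper sign property of $f_T$ from the upper sign-continuity of $T$ (assumption 3): writing $x_t=tx+(1-t)y$ one computes $x-x_t=(1-t)(x-y)$, so that $f_T(x_t,x)=(1-t)\sup_{x_t^*\in T(x_t)}\langle x_t^*,x-y\rangle=-(1-t)\inf_{x_t^*\in T(x_t)}\langle x_t^*,y-x\rangle$; since $1-t>0$, the hypothesis $f_T(x_t,x)\leq0$ for all $t\in\,]0,1[$ is equivalent to $\inf_{x_t^*\in T(x_t)}\langle x_t^*,y-x\rangle\geq0$ for all such $t$, while the conclusion $f_T(x,y)\geq0$ reads literally $\sup_{x^*\in T(x)}\langle x^*,y-x\rangle\geq0$. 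Thus the implication defining the upper sign property of $f_T$ coincides term by term with assumption 3, and the careful bookkeeping of signs in this translation is the delicate point. With every hypothesis of the first case of Theorem \ref{main-result} in place, the set $\qep(f_T,K)=\qvi(T,K)$ is non-empty, which is the desired conclusion.
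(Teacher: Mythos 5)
Your proposal is correct and is essentially the paper's own proof: the paper disposes of the whole verification in one line (``clearly $f_T$ is properly quasi-monotone and has the upper sign property'') before invoking $\qvi(T,K)=\qep(f_T,K)$ and the first case of Theorem \ref{main-result}, which is exactly the reduction you carry out, and your sign bookkeeping for the upper sign property ($x-x_t=(1-t)(x-y)$, so $f_T(x_t,x)\leq0$ iff $\inf_{x_t^*\in T(x_t)}\langle x_t^*,y-x\rangle\geq0$) as well as the convexity argument for the implication in Theorem \ref{main-result} are accurate. One caveat you inherit from the statement itself rather than introduce: assumption 5 defines $G$ via $\sup_{x^*\in T(x)}\langle x^*,y-x\rangle=f_T(x,y)$, whereas the first case of Theorem \ref{main-result} asks for lower semi-continuity of the map $x\mapsto\{y\in K(x)\cap\overline{B}_\rho:\ f_T(y,x)>0\}$, i.e.\ with $\sup_{y^*\in T(y)}\langle y^*,x-y\rangle>0$ --- an apparent typo in the theorem's statement that both you and the paper's proof silently read in the intended way.
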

\begin{proof}
Clearly $f_T$ is properly quasi-monotone and has the upper sign property. Therefore, the result follows from the fact $\qvi(T,K)=\qep(f_T,K)$ and the first case of Theorem \ref{main-result}.
\end{proof}

\begin{remark}
A few remarks are needed.
\begin{enumerate}
\item The previous result is not a consequence of Theorem 1 in \cite{Au-Su}, because $T$ is properly quasi-monotone and $K$ is not closed. Theorem 3 in \cite{Au-Su} proposes an existence result under quasi-monotonicity, that means for all $(x,x^*)$ and $(y,y^*)$ in the graph of $T$ the following implication holds
\[
\langle x^*,y-x\rangle>0~\Rightarrow~\langle y^*,y-x\rangle\geq0, 
\]
but in this case it needs more regularity assumptions on the constraint map.
\item The fourth assumption in the previous result holds, for instance, when the map $K$ is closed and the set 
\[
\left\lbrace (x,y)\in C\times C:~\sup_{x^*\in T(x)}\langle x^*,y-x\rangle\leq0\right\rbrace
\]
is closed. 
\end{enumerate}
\end{remark}

\subsection{Generalized Nash equilibrium problem}
A generalized Nash equilibrium problem (GNEP) consists of
 $p$ players. Each player $\nu$ controls the decision variable $x^\nu\in C_\nu$, where $C_\nu$ is a non-empty convex and closed subset
 of $\R^{n_\nu}$. We denote by $x=(x^1,\dots,x^p)\in \prod_{\nu=1}^p C_\nu=C$ the vector formed by all these decision
 variables and by $x^{-\nu}$, we denote the strategy vector of all the players different from player $\nu$. The set of all such
vectors will be denoted by $C^{-\nu}$. We sometimes write  $(x^\nu,x^{-\nu})$ instead of $x$ in order to emphasize the $\nu$-th player's
variables within $x$. Note that this is still the vector $x=(x^1,\dots,x^\nu,\dots,x^p)$, and the notation $(x^\nu,x^{-\nu})$
does not mean that the block components of $x$ are reordered in such a way that $x^\nu$ becomes the first block.
Each player $\nu$ has an objective function $\theta_\nu:C\to\R$ that depends on all player's strategies. Each player's strategy must belong to a set identified by the set-valued map $K_\nu:C^{-\nu}\tos C_\nu$ in the sense that the strategy
space of player $\nu$ is $K_\nu(x^{-\nu})$, which depends on the rival player's strategies $x^{-\nu}$.
Given the strategy $x^{-\nu}$, player $\nu$ chooses a strategy $x^\nu$ such that it solves the following optimisation problem
\begin{align}\label{opt-nu}
\min_{x^\nu} \theta_\nu(x^\nu,x^{-\nu}),\mbox{ subject to }x^\nu\in K_\nu(x^{-\nu}),
\end{align}
for any given strategy vector $x^{-\nu}$ of the rival players. The solution set of problem \eqref{opt-nu}
is denoted by ${\rm Sol}_\nu(x^{-\nu})$. Thus, a {\em generalized Nash equilibrium} is a vector 
$\hat{x}$ such that $\hat{x}^\nu\in {\rm Sol}_\nu(\hat{x}^{-\nu})$, for any $\nu$.

Associated to a GNEP, there is a function $f^{NI}:\R^n\times \R^n\to\R$, defined by
\[
 f^{NI}(x,y):=\sum_{\nu=1}^p\{\theta_\nu(y^\nu,x^{-\nu})-\theta_\nu(x^\nu,x^{-\nu})\},
\]
which is called Nikaid\^o-Isoda function and was introduced in \cite{Nikaido-Isoda}. 
Additionally, we need to consider the set-valued map $K:C\tos C$ defines as 
\[
K(x):=\prod_{\nu=1}^p K_\nu(x^{-\nu}).
\]
\begin{lemma}\label{equiv-2}
A vector $\hat{x}$ is a solution of the GNEP if and only if, $\hat{x}\in\qep(f^{NI},K)$.
\end{lemma}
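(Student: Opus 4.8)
The plan is to unfold the two definitions and match them term by term, exploiting the product structure of $K$ and the fact that, once the reference point is fixed, the Nikaid\^o-Isoda function splits into a sum of blockwise terms. First I would record two elementary facts. Writing $g_\nu(y^\nu):=\theta_\nu(y^\nu,\hat{x}^{-\nu})-\theta_\nu(\hat{x}^\nu,\hat{x}^{-\nu})$, one has
\[
f^{NI}(\hat{x},y)=\sum_{\nu=1}^p g_\nu(y^\nu),\qquad g_\nu(\hat{x}^\nu)=0,
\]
so that the value of $f^{NI}(\hat{x},\cdot)$ is separable across blocks and vanishes at $y=\hat{x}$. Moreover, since $K(\hat{x})=\prod_{\nu=1}^p K_\nu(\hat{x}^{-\nu})$, the feasibility condition $\hat{x}\in K(\hat{x})$ is equivalent to $\hat{x}^\nu\in K_\nu(\hat{x}^{-\nu})$ for every $\nu$, and membership $y\in K(\hat{x})$ means precisely that each block $y^\nu$ ranges freely and independently over $K_\nu(\hat{x}^{-\nu})$.

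For the direction \emph{equilibrium $\Rightarrow$ QEP}, I would assume $\hat{x}^\nu\in{\rm Sol}_\nu(\hat{x}^{-\nu})$ for every $\nu$. Feasibility of $\hat{x}$ is then immediate, and by definition of the solution set $g_\nu(y^\nu)\ge0$ for all $y^\nu\in K_\nu(\hat{x}^{-\nu})$ and every $\nu$; summing these inequalities over $\nu$ gives $f^{NI}(\hat{x},y)\ge0$ for all $y\in K(\hat{x})$, which is exactly $\hat{x}\in\qep(f^{NI},K)$.

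For the converse, the only point requiring a small idea is to decouple the players from the single scalar inequality $\sum_\nu g_\nu(y^\nu)\ge0$. I would use a one-player deviation test: fixing an index $\mu$ and an arbitrary $y^\mu\in K_\mu(\hat{x}^{-\mu})$, form the vector that equals $y^\mu$ in block $\mu$ and $\hat{x}^\nu$ in every other block. Feasibility of $\hat{x}$ guarantees this vector lies in $K(\hat{x})$, so the QEP inequality applies; since $g_\nu(\hat{x}^\nu)=0$ for $\nu\neq\mu$, the sum collapses to $g_\mu(y^\mu)\ge0$. As $y^\mu$ was arbitrary this yields $\hat{x}^\mu\in{\rm Sol}_\mu(\hat{x}^{-\mu})$, and as $\mu$ was arbitrary $\hat{x}$ is a generalized Nash equilibrium. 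I do not expect a genuine obstacle here: the whole argument rests on the separability of $f^{NI}(\hat{x},\cdot)$ together with the rectangular (product) shape of $K(\hat{x})$, and the one thing to verify carefully is that the test vectors used in the converse indeed belong to $K(\hat{x})$, which is where feasibility of $\hat{x}$ is used.
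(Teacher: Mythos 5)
Your proof is correct and complete. The paper states Lemma \ref{equiv-2} without any proof (it is the classical Nikaid\^o--Isoda equivalence), and your argument --- the blockwise separability of $f^{NI}(\hat{x},\cdot)$ with $g_\nu(\hat{x}^\nu)=0$, combined with one-player deviation vectors whose feasibility follows from the product structure $K(\hat{x})=\prod_{\nu=1}^p K_\nu(\hat{x}^{-\nu})$ and from $\hat{x}\in K(\hat{x})$ --- is exactly the standard argument the authors leave implicit, so there is nothing to fix.
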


A GNEP satisfies the \emph{coerciveness condition} if there exists $\rho>0$ such that\begin{enumerate}
\item $K(x)\cap B_\rho\neq\emptyset$, for all $x\in C$;
\item for each $z\in\fix(K)$, there exists $\rho_z\in]0,\rho[$ such that 
\[
\forall x\in K(z)\cap\overline{B}_\rho\setminus \overline{B}_{\rho_z},~\exists y\in K(z)%\cap\overline{B}_\rho
\mbox{ with }\|y\|<\|x\|\mbox{ and }\]
\[\theta_\nu(y^\nu,x^{-\nu})\leq \theta_\nu(x), \mbox{ for each }\nu.
\]
\end{enumerate}
 In this case, the real number $\rho$ is called \emph{coercive radius} of GNEP.
 \begin{lemma}\label{GNEP-coercive}
 If the GNEP satisfies the coerciveness condition, then the quasi-equilibrium problem associated to Nikaid\^o-Isoda function and $K$ satisfies the uniform coerciveness condition.
 \end{lemma}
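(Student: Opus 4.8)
The plan is to observe that the uniform coerciveness condition for the pair $(f^{NI},K)$ is implied, term by term, by the coerciveness condition for the GNEP, using the \emph{same} coercive radius $\rho$. First I would note that the first condition in the two definitions is literally the same statement, namely the existence of $\rho>0$ with $K(x)\cap B_\rho\neq\emptyset$ for all $x\in C$; so nothing needs to be proved there, and this $\rho$ will also serve as the coercive radius of the quasi-equilibrium problem.

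For the second condition I would fix $z\in\fix(K)$ and take the radius $\rho_z\in]0,\rho[$ furnished by the GNEP coerciveness condition. Given any $x\in K(z)\cap\overline{B}_\rho\setminus\overline{B}_{\rho_z}$, the same condition provides a point $y\in K(z)$ with $\|y\|<\|x\|$ such that $\theta_\nu(y^\nu,x^{-\nu})\leq\theta_\nu(x^\nu,x^{-\nu})$ for each $\nu$. The key step is then to substitute this $y$ into the Nikaid\^o-Isoda function: since each summand $\theta_\nu(y^\nu,x^{-\nu})-\theta_\nu(x^\nu,x^{-\nu})$ is non-positive by the inequality above, summing over $\nu$ yields $f^{NI}(x,y)=\sum_{\nu=1}^p(\theta_\nu(y^\nu,x^{-\nu})-\theta_\nu(x^\nu,x^{-\nu}))\leq0$. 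This is exactly the inequality required by the second condition of the UCC, attained with the same $y$ and the same $\rho_z$.

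I do not expect any genuine obstacle here, since the argument is a direct verification; the only point requiring care is the notational matching, specifically recalling that $\theta_\nu(x)$ in the GNEP condition denotes $\theta_\nu(x^\nu,x^{-\nu})$, so that the $\nu$-th GNEP inequality coincides with the non-positivity of the $\nu$-th summand of $f^{NI}(x,y)$. The essential structural fact is that the GNEP coerciveness condition is a per-player (hence strictly stronger) version of the scalar inequality $f^{NI}(x,y)\leq0$, which explains why this implication holds while its converse need not.
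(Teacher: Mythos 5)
Your proposal is correct and is essentially the paper's own proof: the paper likewise keeps the same $\rho$ and $\rho_z$ and observes that the per-player inequalities $\theta_\nu(y^\nu,x^{-\nu})\leq\theta_\nu(x)$ sum to $f^{NI}(x,y)\leq0$. Your write-up merely spells out the routine matching of the two definitions that the paper leaves implicit.
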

\begin{proof}
It is enough to see that if for each $\nu$ we have $\theta_\nu(y^\nu,x^{-\nu})\leq\theta_\nu(x)$, then
\[
f^{NI}(x,y)=\sum_{\nu=1}^p \theta_\nu(y^\nu,x^{-\nu})-\theta_\nu(x)\leq0.
\]
\end{proof}
\begin{remark}
When every objective function is differentiable and convex with respect to the own variable $x^\nu$, our coerciveness condition implies the one proposed by Aussel and Sultana in \cite{Au-Su}.
\end{remark}

Thanks to  Lemmas \ref{equiv-2} and \ref{GNEP-coercive}, we have the following result on the existence of solutions of a GNEP, which is a direct consequence of Theorems \ref{main-result} and \ref{Gen-Lassonde}.

\begin{theorem}\label{FGNEP}
For any $\nu \in \{1,2, ...,p \}$, let $C_\nu$ be a non-empty, closed and convex subset of $\R^{n_\nu}$, 
$\theta_\nu:\R^n\to\R$ be a continuous function and $K_\nu:C^{-\nu}\tos C_\nu$ be a set-valued map. If the GNEP satisfies the coerciveness condition with coercive radius $\rho$, then it admits a  solution provided that for each $\nu$, $\theta_\nu$ is convex with respect to the $x^{\nu}$ variable, and one of the following sets of assumptions  holds:
\begin{enumerate}
\item[A1]
\begin{enumerate}
\item The set $\fix(K)$ is closed,
\item  for each $\nu$,  the map $K_\nu$ is lower semi-continuous with non-empty and convex values.
\end{enumerate}

\item[A2]
\begin{enumerate}
\item for each $\nu$,  the map $K_\nu$ is closed with convex and non-empty values,
\item the set
\[
N=\left\lbrace x\in C\cap\overline{B}_\rho:~\inf_{y\in K(x)\cap\overline{B}_\rho}\sum_\nu\theta_\nu(y^\nu,x^{-\nu})<\sum_\nu\theta_\nu(x)\right\rbrace
\]
is open.
\end{enumerate}
\end{enumerate}
\end{theorem}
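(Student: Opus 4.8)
The plan is to reduce the GNEP entirely to a quasi-equilibrium problem and then invoke Theorem \ref{main-result} under A1 and Theorem \ref{Gen-Lassonde} under A2. By Lemma \ref{equiv-2}, a vector solves the GNEP if and only if it lies in $\qep(f^{NI},K)$ with $K(x)=\prod_{\nu=1}^p K_\nu(x^{-\nu})$, and by Lemma \ref{GNEP-coercive} the assumed coerciveness condition for the GNEP yields the UCC for $(f^{NI},K)$ with the same coercive radius $\rho$. Since $C=\prod_\nu C_\nu$ is non-empty, convex and closed, it therefore suffices to produce an element of $\qep(f^{NI},K)$. First I would record the structural properties of $f^{NI}$ that hold in both cases: continuity of every $\theta_\nu$ makes $f^{NI}$ jointly continuous; convexity of each $\theta_\nu$ in its own variable makes $y\mapsto f^{NI}(x,y)=\sum_\nu\theta_\nu(y^\nu,x^{-\nu})-\sum_\nu\theta_\nu(x)$ convex, hence semi-strictly quasi-convex; and $f^{NI}(x,x)=0$ for every $x$. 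The convexity in the second argument immediately gives the implication appearing as the third bullet of Theorem \ref{main-result} and as assumption 6 of Theorem \ref{Gen-Lassonde}, via the remark following Proposition \ref{Pro-1}.

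Next I would transfer the regularity of the factors $K_\nu$ to the product map $K$. Each map $x\mapsto K_\nu(x^{-\nu})$ is the composition of the continuous projection onto $C^{-\nu}$ with $K_\nu$, so under A1 it is lower semi-continuous with non-empty convex values, and a finite product of such maps is again lower semi-continuous with non-empty convex values; under A2, $\gra(K)=\bigcap_\nu\{(x,y):y^\nu\in K_\nu(x^{-\nu})\}$ is an intersection of closed sets, so $K$ is closed, again with non-empty convex values. This handles the first bullet of each theorem.

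Under A1 I would apply the second case of Theorem \ref{main-result}. Here $f^{NI}(x,x)=0\geq0$ is immediate, so the only genuine work is the lower semi-continuity and convexity of $R(x)=\{y\in K(x):f^{NI}(x,y)<0\}$. Convexity follows because $R(x)=K(x)\cap\{y:f^{NI}(x,y)<0\}$ is the intersection of the convex set $K(x)$ with a strict sub-level set of the convex function $f^{NI}(x,\cdot)$. For lower semi-continuity I would write $R=K\cap P$, where $P(x)=\{y:f^{NI}(x,y)<0\}$ has open graph because $f^{NI}$ is continuous; then, given an open set $V$ meeting $R(x_0)$ at some $y_0$, I would shrink $V$ inside the open graph of $P$ around $(x_0,y_0)$ and use the lower semi-continuity of $K$ to keep $K(\cdot)\cap V$ non-empty nearby, so that any point it produces automatically lies in $P$. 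With all hypotheses of the second case verified, Theorem \ref{main-result} gives $\qep(f^{NI},K)\neq\emptyset$.

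Under A2 I would instead apply Theorem \ref{Gen-Lassonde}. The map $K$ is closed with non-empty convex values; $f^{NI}(\cdot,y)$ is upper semi-continuous and $f^{NI}(x,\cdot)$ is quasi-convex by the structural properties above; $f^{NI}$ vanishes on the diagonal; and assumption 6 holds as noted. The remaining point is to identify the openness set: from $f^{NI}(x,y)=\sum_\nu\theta_\nu(y^\nu,x^{-\nu})-\sum_\nu\theta_\nu(x)$ one sees that $\inf_{y\in K(x)\cap\overline{B}_\rho}f^{NI}(x,y)<0$ is equivalent to $\inf_{y\in K(x)\cap\overline{B}_\rho}\sum_\nu\theta_\nu(y^\nu,x^{-\nu})<\sum_\nu\theta_\nu(x)$, so the set $D$ of Theorem \ref{Gen-Lassonde} coincides with the set $N$ assumed open in A2. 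Thus Theorem \ref{Gen-Lassonde} applies and again yields $\qep(f^{NI},K)\neq\emptyset$. In either case Lemma \ref{equiv-2} converts this solution into a solution of the GNEP, completing the argument. I expect the main obstacle to be the lower semi-continuity of $R$ in case A1, since intersecting a lower semi-continuous map with a variable open set is delicate; the open-graph description of $P$ is precisely what makes the intersection well behaved.
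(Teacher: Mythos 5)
Your proposal is correct and follows essentially the same route as the paper: reduce via Lemmas \ref{equiv-2} and \ref{GNEP-coercive}, apply the second case of Theorem \ref{main-result} under A1 and Theorem \ref{Gen-Lassonde} under A2 after identifying $N$ with the set $D$. The only difference is that you spell out details the paper delegates to its remarks --- notably the open-graph argument for the lower semi-continuity of $R$, which the paper covers by the remark following Theorem \ref{main-result} --- and your verification of that step is sound.
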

\begin{proof}
It is clear that Nikaid\^o-Isoda function is continuous and convex in its second argument.
By Lemma \ref{GNEP-coercive}, we have that the quasi-equilibrium problem associated to $f^{NI}$ and $K$ satisfies the UCC. 
In the case $A1$, the map $K$ is lower semi-continuous with convex and non-empty values.
Hence the set-valued map $R$ in the second case of Theorem \ref{main-result} is lower semi-continuous with convex values. So, the result follows from Theorem \ref{main-result} and Lemma \ref{equiv-2}.

Finally, in the case $A2$ the map $K$ is closed with convex and non-empty values.
Moreover, it holds
\[
N=\left\lbrace x\in C\cap\overline{B}_\rho:~\inf_{y\in K(x)\cap\overline{B}_\rho}f^{NI}(x,y)<0\right\rbrace.
\]
Hence, the result follows from Theorem \ref{Gen-Lassonde} and Lemma \ref{equiv-2}.
\end{proof}
The previous result is strongly related to Theorem 5 in \cite{Au-Su}. However, it is important to notice that in the set of assumptions $A1$ the constraint set-valued maps are not closed. For $A2$, the constraint maps are not lower semi-continuous. Moreover, both cases are not differentiable, and it is possible that the images of the constraint maps could have empty interior.

\bibliographystyle{abbrv}

\end{document}